\theoremstyle{plain}
\newtheorem{theorem}{Theorem}[section]
\newtheorem{corollary}[theorem]{Corollary}
\newtheorem{lemma}[theorem]{Lemma}
\newtheorem{prop}[theorem]{Proposition}
\theoremstyle{definition}
\theoremstyle{remark}
\newcommand{\nri}{n\rightarrow\infty}
\newcommand{\bbZ}{\mathbb{Z}}
\newcommand{\bbC}{\mathbb{C}}
\newcommand{\bbN}{\mathbb{N}}
\newcommand{\mcn}{\mathcal{N}}
\newcommand{\mcg}{\mathcal{G}}
\newcommand{\mcp}{\mathcal{P}}
\newcommand{\mcl}{\mathcal{L}}
\newcommand{\mcm}{\mathcal{M}}
\newcommand{\mcr}{\mathcal{R}}
\newcommand{\eitheta}{e^{i\theta}}
\newcommand{\at}{\makeatletter @\makeatother}
\DeclareMathOperator*{\supp}{supp}
\DeclareMathOperator*{\wlim}{w-lim}
\DeclareMathOperator*{\spn}{span}
\DeclareMathOperator{\dt}{det}
\title[]{The Bergman Shift Operator on Polynomial Lemniscates}
\author[]{Brian Simanek}
\date{}
\begin{document}
\maketitle

\begin{abstract}
We investigate the relationship between the Bergman shift operator and the support of the corresponding measure.  We pay special attention to the situation when the measure of orthogonality is concentrated on a polynomial lemniscate.  As an application of our new results, we obtain a ratio asymptotic result for a wide variety of measures supported on polynomial lemniscates.
\end{abstract}

\vspace{4mm}

\footnotesize\noindent\textbf{Keywords:} Weak asymptotic measures, Ratio asymptotics, Bergman Polynomials, Bergman Shift Operator, Hessenberg matrices, Christoffel transform

\vspace{2mm}

\noindent\textbf{Mathematics Subject Classification:} Primary 42C05; Secondary 60B10, 15B05, 47B35

\vspace{2mm}

\normalsize

\section{Introduction}\label{intro}

Let $\mu$ be a finite Borel measure whose support is an infinite and compact subset of the complex plane $\bbC$.  Given such a measure, one can perform Gram-Schmidt orthogonalization on the sequence $\{1,z,z^2,z^3,\ldots\}$ in the space $L^2(\bbC,\mu)$ to arrive at a sequence of polynomials $\{\varphi_n(z;\mu)\}_{n\geq0}$ satisfying
\[
\int_{\bbC}\varphi_n(z;\mu)\overline{\varphi_m(z;\mu)}d\mu(z)=\delta_{m,n},
\]
and normalized so that $\varphi_n$ has positive leading coefficient $\kappa_n=\kappa_n(\mu)$.  The polynomials $\{\varphi_n\}_{n=0}^{\infty}$ are called the orthonormal polynomials for the measure $\mu$.  We will also denote by $\Phi_n(z;\mu)=\kappa_n^{-1}\varphi_n(z;\mu)$ the monic orthogonal polynomial of degree $n$.

Our focus here will be on the relationship between the measure $\mu$ and the associated \textit{Bergman shift operator}.  To define this operator, let $\mcm:L^2(\mu)\rightarrow L^2(\mu)$ be the map given by $(\mcm f)(z)=zf(z)$.  Let $\mcp$ be the closure of the span of the polynomials inside $L^2(\mu)$.  It is easy to see that $\mcm$ maps $\mcp$ to itself and that the restricted map has matrix representation
\[
M=\begin{pmatrix}
M_{11} & M_{12} & M_{13} & M_{14} & \cdots\\
M_{21} & M_{22} & M_{23} & M_{24} & \cdots\\
0 & M_{32} & M_{33} & M_{34} & \cdots\\
0 & 0 & M_{43} & M_{44} & \cdots\\
0 & 0 & 0 & M_{54} & \cdots\\
\vdots & \vdots & \vdots & \vdots & \ddots
\end{pmatrix}
\]
in terms of the orthonormal basis given by the orthonormal polynomials (see \cite{Shift}).  Indeed, since $M_{jk}=\langle z\varphi_{k-1},\varphi_{j-1}\rangle$, it is easy to see that $M_{jk}=0$ if $j>k+1$.  Thus, the operator $\mcm$ determines a Hessenberg matrix $M$.  The relationship between $\Phi_n$ and the matrix $M$ is Proposition \ref{det} (below), which is contained in \cite[Theorem 6.2]{CDRev}.  To properly state it, we define $\dt_n$ to be the determinant of a linear operator from $\spn\{1,z,\ldots,z^{n-1}\}$ to itself.

\begin{prop}\label{det}
Let $\pi_n$ be the projection onto the $n$-dimensional subspace given by the span of $\{1,z,\ldots,z^{n-1}\}$ inside $\mcp$.  The polynomial $\Phi_n(z;\mu)$ and the matrix $M$ are related by
\begin{align}\label{keydet}
\Phi_n(z;\mu)=\dt_n\left(z-\pi_n M\pi_n\right).
\end{align}
\end{prop}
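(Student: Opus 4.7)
The plan is to reduce the claim to a statement about the characteristic polynomial of a truncated Hessenberg matrix, and then to extract $\Phi_n$ directly through a Cramer-type manipulation driven by the Hessenberg structure of $M$.

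Since $\spn\{1, z, \ldots, z^{n-1}\} = \spn\{\varphi_0, \ldots, \varphi_{n-1}\}$, the operator $\pi_n M \pi_n$ is represented in the orthonormal basis $\{\varphi_j\}_{j=0}^{n-1}$ by the leading $n \times n$ submatrix $M_n$ of $M$. Consequently $\dt_n(z - \pi_n M \pi_n) = \det(zI_n - M_n) =: p_n(z)$, a monic polynomial of degree $n$. Because $\Phi_n$ is also monic of degree $n$, the task reduces to showing $p_n = \Phi_n$.

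The key step will be to convert the orthogonality recurrence into a matrix identity. Expanding $z\varphi_{k-1}(z) = \sum_{j=1}^{k+1} M_{jk}\varphi_{j-1}(z)$ and packaging the equations for $k = 1, \ldots, n$ into a single vector identity, I expect the Hessenberg structure to make all terms for $k < n$ stay within the first $n$ basis functions, while the $k=n$ equation contributes a single boundary term involving $\varphi_n(z)$. This should give
\[
(zI_n - M_n^T)\vec{v}(z) = M_{n+1,n}\varphi_n(z)e_n, \qquad \vec{v}(z) := (\varphi_0(z), \ldots, \varphi_{n-1}(z))^T.
\]
Left-multiplying by the adjugate and reading off the first component converts this into
\[
p_n(z)\varphi_0 = M_{n+1,n}\varphi_n(z)\bigl[\mathrm{adj}(zI_n - M_n^T)\bigr]_{1,n},
\]
a direct proportionality between $p_n$ and $\varphi_n$.

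To finish I will evaluate the cofactor on the right, which is (up to sign) the determinant of the $(n-1)\times(n-1)$ submatrix of $zI_n - M_n^T$ obtained by deleting row $n$ and column $1$. I expect that, thanks to the Hessenberg vanishing $M_{ab} = 0$ for $a > b+1$, all entries above the diagonal of this submatrix drop out, leaving a lower triangular matrix with diagonal entries $-M_{j+1,j} = -\kappa_{j-1}/\kappa_j$ and therefore determinant $\pm \kappa_0/\kappa_{n-1}$. Substituting this back and using $\varphi_0 = \kappa_0$ together with $M_{n+1,n} = \langle z\varphi_{n-1}, \varphi_n\rangle = \kappa_{n-1}/\kappa_n$ should cause all the $\kappa$-ratios to telescope, yielding $p_n(z) = \varphi_n(z)/\kappa_n = \Phi_n(z)$. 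The main obstacle is the cofactor computation: one must identify the correct submatrix, verify its triangularity by carefully tracking which Hessenberg zeros land above the diagonal, and keep the adjugate signs straight; the remaining manipulations are formal bookkeeping.
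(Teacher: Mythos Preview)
Your argument is correct. The paper does not supply its own proof of this proposition; it simply records the statement and attributes it to \cite[Theorem 6.2]{CDRev}. Your proposal therefore goes beyond what the paper does, giving a self-contained derivation.

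The route you take is a standard and clean one: package the relations $z\varphi_{k-1}=\sum_{j\le k+1}M_{jk}\varphi_{j-1}$ into the vector identity $(zI_n-M_n^{T})\vec v(z)=M_{n+1,n}\varphi_n(z)e_n$, multiply by the adjugate, and evaluate the single cofactor $[\mathrm{adj}(zI_n-M_n^{T})]_{1,n}$. Your expectation about that cofactor is right: deleting row $n$ and column $1$ from the lower-Hessenberg matrix $zI_n-M_n^{T}$ produces a lower triangular $(n-1)\times(n-1)$ matrix with diagonal entries $-M_{j+1,j}$, so the determinant is $(-1)^{n-1}\prod_{j=1}^{n-1}\kappa_{j-1}/\kappa_j=(-1)^{n-1}\kappa_0/\kappa_{n-1}$, and the adjugate sign $(-1)^{n+1}$ cancels the $(-1)^{n-1}$. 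The telescoping then gives $p_n=\varphi_n/\kappa_n=\Phi_n$ exactly as you outline. The only caveat worth recording explicitly when you write it up is that $\varphi_0=\kappa_0\neq 0$, so dividing by $\varphi_0$ is legitimate; everything else is the bookkeeping you already flagged.
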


Given a measure $\mu$, one can calculate the corresponding matrix $M$ by evaluating the necessary integrals.  This is not always practical, so it is common to look for properties of the measure $\mu$ that manifest themselves in the matrix $M$.  For example, it is known if the measure $\mu$ is supported on a compact subset of the real line, then the corresponding matrix $M$ is self-adjoint and is non-zero only on its three main diagonals.  A more non-trivial result is contained in \cite{Shift}.  If $\mcr$ denotes the right shift operator on $\ell^2(\bbN)$ and $\mcl$ denotes the left shift operator, then we say a matrix $A$ is \textit{weakly asymptotically Toeplitz} (see \cite{SimHess1}) if the sequence
\[
\{\mcl^nA\mcr^n\}_{n\in\bbN}
\]
converges weakly to a Toeplitz matrix as $\nri$.  With this notation, the relevant result from \cite{Shift} can be stated as:

\begin{theorem}[Saff $\&$ Stylianopoulos, \cite{Shift}]\label{SS1}
If $\mu$ is area measure on a region $G$ whose boundary is a Jordan curve that is piecewise analytic without cusps, then the corresponding matrix $M$ is weakly asymptotically Toeplitz.
\end{theorem}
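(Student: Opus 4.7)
The plan is to combine strong exterior asymptotics of the Bergman orthonormal polynomials with Green's formula, converting each matrix entry $M_{n+i,n+j}$ into a boundary integral whose $\nri$ limit depends only on $j-i$. Let $\phi:\bbC\setminus\barg\rightarrow\{|w|>1\}$ be the exterior conformal map, with inverse $\psi$. For a Jordan curve that is piecewise analytic without cusps, Suetin-type strong asymptotic theorems yield
\[
\varphi_n(z)=\sqrt{(n+1)/\pi}\,\phi'(z)\,\phi(z)^n\,(1+o(1))
\]
uniformly on compact subsets of $\bbC\setminus\barg$, with controlled behavior up to $\partial G$ away from its corners.

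The first step is to rewrite the area integral
\[
M_{n+i,n+j}=\int_G z\,\varphi_{n+j-1}(z)\,\overline{\varphi_{n+i-1}(z)}\,dA(z)
\]
as a contour integral. If $V_{n+i-1}$ is a polynomial antiderivative of $\varphi_{n+i-1}$, then applying the complex Green identity $\int_G \partial_{\bar z}u\,dA=\tfrac{1}{2i}\oint_{\partial G}u\,dz$ with $u(z)=z\,\varphi_{n+j-1}(z)\,\overline{V_{n+i-1}(z)}$ produces
\[
M_{n+i,n+j}=\frac{1}{2i}\oint_{\partial G}z\,\varphi_{n+j-1}(z)\,\overline{V_{n+i-1}(z)}\,dz.
\]

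Next I would parameterize the boundary by $z=\psi(\eitheta)$ and insert the asymptotics. The strong formula gives $\varphi_{n+j-1}(\psi(\eitheta))\sim\sqrt{(n+j)/\pi}\,\phi'(\psi(\eitheta))\,e^{i(n+j-1)\theta}$, and integrating it along arcs of $\psi$ together with $\phi'(\psi(w))\psi'(w)\equiv 1$ produces $V_{n+i-1}(\psi(\eitheta))\sim\frac{1}{\sqrt{(n+i)\pi}}\,e^{i(n+i)\theta}$. Substituting these, using $dz=\psi'(\eitheta)\,i\eitheta\,d\theta$, cancelling $\phi'\cdot\psi'=1$, and exploiting $\sqrt{(n+j)/(n+i)}\rightarrow 1$, one obtains
\[
\lim_{\nri}M_{n+i,n+j}=\frac{1}{2\pi}\int_0^{2\pi}\psi(\eitheta)\,e^{i(j-i)\theta}\,d\theta.
\]
The right side depends only on $j-i$, which is exactly the statement that $\mcl^nM\mcr^n$ converges entrywise, and hence (by boundedness of $M$) weakly, to a Toeplitz matrix; the Toeplitz symbol is the boundary parameterization $\psi(\eitheta)$, whose Fourier coefficients supply the limiting diagonal values. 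As a sanity check, for the unit disk $\psi(w)=w$ gives $t_{-1}=1$ and all other $t_k=0$, recovering that $M_{k+1,k}=\sqrt{k/(k+1)}\to 1$ for the Bergman shift on $\bbD$.

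The main obstacle will be making this boundary-integral computation rigorous in the presence of corners of $\partial G$. At each corner the strong asymptotic for $\varphi_n$ deteriorates and $\psi'$ develops an algebraic singularity, and one must verify that these singularities do not destroy the limit. The no-cusp hypothesis ensures that these singularities are integrable, so I would localize by excising a small arc around each corner, use uniform bounds on $\|\varphi_n\|_{L^2(\partial G)}$ of order $O(\sqrt{n})$ together with $|\psi'|\in L^p$ for some $p>1$ to show that the excised contributions are $o(1)$ as the arcs shrink, uniformly in $n$, and then apply dominated convergence on the remaining analytic arcs where Suetin's asymptotic holds uniformly. This corner-localization step is the essential use of the piecewise analytic, no-cusp hypothesis.
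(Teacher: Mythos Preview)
The paper does not prove Theorem~\ref{SS1}; it is stated there as a known result of Saff and Stylianopoulos and attributed to \cite{Shift}, with two remarks noting that the original paper gives more (the symbol of the limiting Toeplitz operator and rates). So there is no ``paper's own proof'' to compare against.

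That said, your sketch is essentially the argument carried out in \cite{Shift}: convert the area inner product to a boundary integral via Green's theorem using a polynomial antiderivative, insert the Suetin-type strong exterior asymptotics for the Bergman polynomials, and read off that the limit of $M_{n+i,n+j}$ depends only on $j-i$ and equals the $(j-i)$th Fourier coefficient of $\psi(e^{i\theta})$. Your identification of the symbol and the unit-disk sanity check are both correct. The point you flag as the main obstacle---controlling the contribution near corners where $\psi'$ has an algebraic singularity and the strong asymptotics degrade---is exactly where the ``piecewise analytic without cusps'' hypothesis is used in \cite{Shift}; the no-cusp condition guarantees the relevant singularities are integrable, and the localization you describe is the right strategy. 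In short, your proposal is a faithful outline of the original proof, not an alternative route; to make it complete you would need to supply the uniform estimates near corners, which is the technical heart of \cite{Shift}.
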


\noindent\textit{Remark.}  The result proved in \cite{Shift} is stronger in that it provides the symbol of the limiting Toeplitz operator and estimates on the rate of convergence.

\vspace{2mm}

\noindent\textit{Remark.}  An extension of Theorem \ref{SS1} to more general measures was proven by the author in \cite{SimHess1}.

\vspace{2mm}

It is also interesting to consider the problem inverse to the one just discussed.  More precisely, it is natural to ask what properties of the measure $\mu$ can be deduced from certain knowledge of the matrix $M$.  There is an extensive literature on this subject when the measure $\mu$ is supported on the unit circle $\{z:|z|=1\}$ or a compact subset of the real line (see for example \cite{OPUC1,OPUC2,Rice}).  These classical cases are special because the entries of the matrix $M$ can be described in terms of the coefficients appearing in a recurrence relation satisfied by the orthonormal polynomials.  In more general settings, no such recurrence relation exists (see \cite{LBS,PuSty}), but the inverse problem is still an interesting one.

Among the most basic properties of a measure is its support.  Therefore, we can begin our investigation by attempting to relate properties of the matrix $M$ to conditions satisfied by the support of the measure $\mu$.  However, we must proceed cautiously since some observations can be misleading.  Consider the following example.

\vspace{2mm}

\noindent\textbf{Example.}  Let $\mu$ be area measure on the unit disk $\{z:|z|<1\}$.  It is easy to verify that the matrix $M$ is weakly asymptotically Toeplitz and the limiting matrix is just the right shift operator on $\ell^2(\bbN)$.  This means that for large values of $n$, one has $\|z\Phi_n(z;\mu)\|\sim\|\Phi_{n}(z;\mu)\|$, which suggests that the measure is supported - or at least heavily concentrated - near the set $\{z:|z|=1\}$.  However, this is not the case since the measure $\mu$ is evenly distributed across the whole unit disk.

\vspace{2mm}

The above example highlights the main difficulty in using the matrix $M$ to identify the support of the measure $\mu$.  It is very often the case that the behavior of the measure near the boundary of the polynomial convex hull of its support is the dominant influence in determining the matrix $M$ (see for example \cite{Slides}).  Therefore, it is easy to find measures with substantially different supports whose $M$ matrices are very nearly identical.

However, there is a way to state precisely what portions of a measure most heavily influence the behavior of the matrix $M$.  This is done by investigating the weak limits of the sequence $\{|\varphi_n(z;\mu)|^2d\mu(z)\}_{n\geq0}$.  Any weak limit of this sequence is called a \textit{weak asymptotic measure}.  It is well-known that the polynomial $\Phi_n(z;\mu)$ satisfies
\[
\|\Phi_n(z;\mu)\|=\min\{\|Q\|:Q=z^n+\cdots\}.
\]
Consequently, one expects the polynomial $\Phi_n(z;\mu)$ to be small where the measure $\mu$ is dense and larger where the measure $\mu$ is sparse (to the extent this is possible).  Therefore, by considering the weak asymptotic measures, we can understand what parts of the measure $\mu$ exhibit the most influence in determining the asymptotic behavior of the orthonormal polynomials.  Consequently, we will shift our focus to relating properties of the matrix $M$ to the support of the weak asymptotic measures.

As stated above, this line of inquiry has a long history in the classical settings of the unit circle and the real line.  One of the most profound results in this context is the so-called \textit{Magic Formula} from \cite{MagicForm}.  We refer the reader to \cite{MagicForm} for a precise statement of the Magic Formula, but we will mention that it relates the matrix $P(M)$ for an appropriate polynomial $P$ to properties of the support of the corresponding measure.  Our main result - which takes a similar form - is the following:

\begin{theorem}\label{magicish}
Let $\mu$ be a finite measure with compact and infinite support in the complex plane and let $P(z)$ be a monic polynomial of degree $q\geq1$.  Fix $r>0$.  The matrices $\{(P(M)-r\mcr^q)\mcr^n\}_{n\in\bbN}$ converge strongly to $0$ as $\nri$ if and only if both of the following conditions are satisfied:
\begin{itemize}
\item[i)]  $\lim_{\nri}\kappa_n\kappa_{n+q}^{-1}=r$,
\item[ii)]  every weak asymptotic measure is supported on $\{z:|P(z)|=r\}$.
\end{itemize}
\end{theorem}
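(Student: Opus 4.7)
The plan is to reduce the strong-convergence statement to a column-by-column estimate on $P(M)-r\mcr^q$ and then compute that column norm in terms of the quantities appearing in (i) and (ii).

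Since $\mu$ has compact support, $M$ is bounded, hence so is $P(M)$, and the operators $(P(M) - r\mcr^q)\mcr^n$ are uniformly bounded in $n$. By the standard argument of strong convergence on a dense set plus uniform boundedness, strong convergence to $0$ on all of $\ell^2(\bbN)$ is equivalent to convergence on every basis vector $e_k$; and since $\mcr^n e_k = e_{n+k}$, this is in turn equivalent to
\[
\|(P(M) - r\mcr^q)\, e_m\|_{\ell^2}^2 \longrightarrow 0 \qquad (m\to\infty).
\]
To compute this column norm, expand $P(z)\varphi_m(z) = \sum_{j=0}^{m+q} c_{j,m}\varphi_j(z)$; since $P$ is monic of degree $q$, matching leading terms forces $c_{m+q,m} = \kappa_m/\kappa_{m+q}$, a real positive number. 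Subtracting $r\mcr^q$ only alters the entry in row $m+q$, producing the central identity
\[
\|(P(M) - r\mcr^q)\, e_m\|^2 \;=\; \|P\varphi_m\|_{L^2(\mu)}^2 \;-\; 2r\,\frac{\kappa_m}{\kappa_{m+q}} \;+\; r^2.
\]

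For the ``if'' direction, (i) sends the middle term to $-2r^2$, while (ii), combined with continuity of $|P|^2$ on the compact $\supp(\mu)$, yields $\int |P|^2\, d\nu = r^2$ for every weak-subsequential limit $\nu$ of the probability measures $|\varphi_m|^2 d\mu$; hence $\|P\varphi_m\|^2\to r^2$ and the identity collapses to $0$. For the ``only if'' direction, the scalar $\kappa_m/\kappa_{m+q}-r$ is bounded in modulus by the column norm, which immediately gives (i). To obtain (ii) I would iterate the hypothesis: since $P(M)e_m - r e_{m+q} = (P(M)-r\mcr^q)e_m$ has norm tending to $0$ and $\|P(M)\|<\infty$, induction gives
\[
\bigl\|P(M)^k e_m - r^k e_{m+qk}\bigr\|\longrightarrow 0 \qquad \text{for every } k\geq 1,
\]
so that $\int |P|^{2k} |\varphi_m|^2\, d\mu = \|P(M)^k e_m\|^2 \to r^{2k}$. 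Passing to any weak-subsequential limit $\nu$ of $|\varphi_m|^2 d\mu$ (continuity of $|P|^{2k}$ on the compact $\supp(\mu)$) gives $\int |P|^{2k}\, d\nu = r^{2k}$ for all $k$. In particular $\int (|P|^2 - r^2)^2\, d\nu = r^4 - 2r^2\!\cdot\! r^2 + r^4 = 0$, forcing $|P|^2 \equiv r^2$ on $\supp(\nu)$, which is exactly (ii).

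The only real obstacle is the necessity of (ii): the central identity, together with convergence of its left-hand side to $0$, pins down only the second moment of $|P|^2$ under each weak asymptotic measure, and this alone is not enough to localize $\supp(\nu)$ on $\{|P|=r\}$. The resolution is the higher-moment iteration above, which extracts the full operator content of the hypothesis (beyond the single diagonal entry that gives (i)) and needs only the fourth moment to close via a variance argument.
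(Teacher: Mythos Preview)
Your proof is correct and follows the paper's approach closely: both reduce to the column estimate, establish the same central identity
\[
\|(P(M)-r\mcr^q)e_m\|^2 = \|P\varphi_{m-1}\|_{L^2(\mu)}^2 - 2r\,\frac{\kappa_{m-1}}{\kappa_{m+q-1}} + r^2,
\]
and both derive $\|P(M)^k e_m\|^2\to r^{2k}$ for every $k$ from the hypothesis.

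The one genuine difference is in the final step deducing (ii). The paper first invokes a separate lemma (stated for a general entire function $f$ and banded Toeplitz limit $\mcg$) to obtain the inclusion $\supp(\gamma)\subseteq\{|P|\le r\}$, and then runs a second contradiction argument to rule out mass on $\{|P|\le s\}$ for $s<r$, using the inequality $r^{2k}\le s^{2k}\beta+r^{2k}(1-\beta)$. Your variance computation $\int(|P|^2-r^2)^2\,d\nu = r^4-2r^4+r^4=0$ handles both bounds in one stroke and needs only the moments $k=1,2$, so it is a cleaner finish for this particular theorem. The paper's lemma, on the other hand, is formulated with enough generality (arbitrary entire $f$, arbitrary banded Toeplitz $\mcg$) to have independent interest beyond the present statement.
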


Theorem \ref{magicish} will allow us to deduce additional properties of the orthonormal polynomials in cases when the hypotheses of the theorem are satisfied.  Before we state our first corollary, we need to remind ourselves of the notion of a \textit{right limit} of a matrix.  For a matrix $A$, let $A_n^{(m)}$ be the $2m+1\times2m+1$ sub-matrix of $A$ centered at $A_{n,n}$.  We will say that a matrix $X$ is a right limit of the matrix $M$ if there is a subsequence $\mcn\subseteq\bbN$ so that for every $m\in\bbN$ it holds that
\[
\lim_{{\nri}\atop{n\in\mcn}}M_n^{(m)}=X_0^{(m)}.
\]
It is easy to see that right limits always exist whenever the matrix elements of $M$ are bounded (which they are in the cases we are considering) and that every right limit is a bi-infinite matrix even though $M$ is indexed by the natural numbers.  The subject of right limits in the context of Jacobi matrices is discussed in \cite[Chapter 7]{Rice} in terms of the recursion coefficients for orthogonal polynomials on the real line.  With this terminology, we can state our first corollary.

\begin{corollary}\label{prl}
Let $P$ be a monic polynomial of degree $q\geq1$ and fix $r>0$.  If the matrices $\{(P(M)-r\mcr^q)\mcr^n\}_{n\in\bbN}$ converge strongly to $0$ as $\nri$ then
\begin{itemize}
\item[a)]  For every $z$ not in the convex hull of the support of $\mu$ it is true that
\begin{align*}
\lim_{\nri}\frac{P(z)\varphi_n(z;\mu)}{r\varphi_{n+q}(z;\mu)}=1,
\end{align*}
\item[b)]  If $X$ is a right limit of the matrix $M$, then $P(X)=r\mcr^q$ and $X$ is $q$-periodic along its diagonals.
\end{itemize}
\end{corollary}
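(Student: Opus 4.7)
The first step is to translate the operator hypothesis into an $L^2$ statement about the orthonormal polynomials. Since the $(n+1)$-st column of $P(M)$ lists the Fourier coefficients of $P(z)\varphi_n(z;\mu)$ in the basis $\{\varphi_j\}$ and the corresponding column of $r\mcr^q$ represents $r\varphi_{n+q}$, the strong convergence hypothesis is equivalent to
\[
\|P(z)\varphi_n(z;\mu) - r\varphi_{n+q}(z;\mu)\|_{L^2(\mu)} \to 0.
\]
Write $E_n(z) := P(z)\varphi_n(z;\mu) - r\varphi_{n+q}(z;\mu)$.

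For part (a), fix $z_0$ outside the convex hull of $\supp\mu$ and decompose $E_n = \alpha_n\varphi_{n+q} + R_n$, where matching leading coefficients gives $\alpha_n = \kappa_n/\kappa_{n+q} - r$, so $\alpha_n \to 0$ by Theorem \ref{magicish}(i), and $R_n$ has degree at most $n+q-1$. Since $R_n \perp \varphi_{n+q}$, we obtain $\|R_n\|_{L^2(\mu)}^2 + |\alpha_n|^2 = \|E_n\|^2 \to 0$. Writing $R_n = \sum_{k=0}^{n+q-1}\beta_{n,k}\varphi_k$, Cauchy--Schwarz yields
\[
\left|\frac{R_n(z_0)}{\varphi_{n+q}(z_0)}\right| \leq \|R_n\|_{L^2(\mu)}\left(\sum_{k=0}^{n+q-1}\left|\frac{\varphi_k(z_0)}{\varphi_{n+q}(z_0)}\right|^2\right)^{1/2}.
\]
Standard regularity results for Bergman-type polynomials (in particular, Fejer's theorem placing all zeros of $\varphi_n$ inside the convex hull of $\supp\mu$, together with the conclusions of Theorem \ref{magicish}) give a geometric lower bound $|\varphi_{k+1}(z_0)/\varphi_k(z_0)| \geq \rho > 1$ for large $k$, making the inner sum a uniformly convergent geometric series. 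Hence $R_n(z_0)/\varphi_{n+q}(z_0) \to 0$, which together with $\alpha_n \to 0$ gives $E_n(z_0)/(r\varphi_{n+q}(z_0)) \to 0$, equivalent to (a).

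For part (b), $P(X) = r\mcr^q$ follows from two observations. Since $M$ is Hessenberg with lower bandwidth $1$, for fixed $(j,k)$ the entry $(P(M))_{n+j, n+k}$ is a finite sum of products of entries $M_{n+a, n+b}$ with $(a,b)$ in a bounded index window depending only on $(j, k, q)$, so it converges along any right-limit subsequence $\mcn$ to $P(X)_{jk}$. The strong convergence hypothesis gives $\sum_j|P(M)_{j,m}-r\delta_{j,m+q}|^2 \to 0$ as $m\to\infty$, which forces $P(M)_{m+j, m+k} \to r\delta_{j-k, q}$ pointwise in $(j, k)$; hence $P(X)_{jk} = r\delta_{j-k, q}$. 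For the $q$-periodicity of $X$ along diagonals, induct on the diagonal index. The subdiagonal: the unique Hessenberg path of length $q$ from $k+q$ to $k$ makes $(P(X))_{k+q, k} = \prod_{i=0}^{q-1}X_{k+i+1, k+i} = r$, so comparing at $k$ and $k+1$ gives $X_{k+q+1, k+q} = X_{k+1, k}$. The main diagonal: $(P(X))_{k+q-1, k} = 0$ expands by enumeration of paths with exactly $q-1$ downsteps and one ``stay'' into
\[
\left(\prod_{j=0}^{q-2}X_{k+j+1, k+j}\right)\left(\sum_{i=0}^{q-1}X_{k+i, k+i} + c_{q-1}\right) = 0,
\]
where $c_{q-1}$ is the subleading coefficient of $P$; thus $\sum_{i=0}^{q-1}X_{k+i, k+i} = -c_{q-1}$ is independent of $k$ and successive subtraction gives $X_{k+q, k+q} = X_{k, k}$. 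Higher superdiagonals are handled analogously: the equation $(P(X))_{k+q-m-1, k} = 0$ expresses the new $m$-th superdiagonal entry in terms of previously established $q$-periodic entries on lower diagonals, forcing its own $q$-periodicity.

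The main technical obstacle will be the combinatorial bookkeeping for the higher-diagonal induction step in (b): for each new superdiagonal one must enumerate the contributing path types and verify that the combination of previously periodic contributions yields a shift-invariant constraint on the new entry. A secondary subtlety in (a) is justifying the geometric ratio bound on $|\varphi_n(z_0)|$ outside the convex hull, which invokes classical Bergman polynomial asymptotics and the regularity implicit in the corollary's hypothesis.
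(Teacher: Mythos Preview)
Your derivation of $P(X)=r\mcr^q$ in part (b) matches the paper. For the $q$-periodicity, however, you launch an inductive path-counting argument that, as you yourself flag, has nontrivial bookkeeping---and your sketch does not actually check that the equation $(P(X))_{k+q-m-1,k}=0$ isolates a \emph{single} entry on the $m$-th superdiagonal; in general several such entries appear simultaneously, so the step ``expresses the new $m$-th superdiagonal entry in terms of previously established entries'' is not yet justified. The paper replaces this entire induction by a one-line algebraic observation: since $X$ commutes with $P(X)=r\mcr^q$, one has
\[
X\mcr^q=\tfrac{1}{r}\,XP(X)=\tfrac{1}{r}\,P(X)X=\mcr^qX,
\]
and on $\ell^2(\bbZ)$ commuting with $\mcr^q$ is precisely $q$-periodicity along diagonals.

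For part (a) there is a genuine gap. Your Cauchy--Schwarz bound requires $\sum_{k\le n+q-1}|\varphi_k(z_0)/\varphi_{n+q}(z_0)|^2$ to be bounded, which you deduce from a claimed single-step ratio estimate $|\varphi_{k+1}(z_0)/\varphi_k(z_0)|\ge\rho>1$. Neither Fej\'er's theorem nor Theorem~\ref{magicish} gives this. Fej\'er only yields $d^k\le|\Phi_k(z_0)|\le D^k$ for constants $0<d<D$ depending on $z_0$ and the convex hull, which says nothing about consecutive ratios; and Theorem~\ref{magicish} controls only the $q$-step leading-coefficient ratio $\kappa_n/\kappa_{n+q}\to r$, not $\kappa_n/\kappa_{n+1}$. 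There is no ``regularity implicit in the hypothesis'' that manufactures single-step ratio control---indeed, establishing such ratio asymptotics is essentially what (a) asserts, so invoking it here is close to circular. The paper does not attempt a direct argument: it notes that the hypothesis together with Theorem~\ref{magicish} gives $\|r^{-1}P\varphi_n\|_{L^2(\mu)}\to1$ and $\kappa_n/(r\kappa_{n+q})\to1$, and then cites \cite[Theorem~2.2]{SimaRat}, which is exactly a ratio-asymptotics theorem taking these two conditions as its hypotheses.
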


\begin{proof}[Proof of Corollary \ref{prl}]
To prove the ratio asymptotic statement, notice that our hypotheses and Theorem \ref{magicish} imply
\[
\lim_{\nri}\|r^{-1}P\varphi_n\|_{L^2(\mu)}=1,\qquad\lim_{\nri}\frac{\kappa_n}{r\kappa_{n+q}}=1.
\]
The desired conclusion now follows from \cite[Theorem 2.2]{SimaRat}.

Turning our attention to the right limits, notice that our hypotheses imply
\[
\lim_{\nri}P(M)_{n-j,n}=
\begin{cases}
0,& \qquad\, j\neq-q\\
 r, & \qquad\, j=-q.
\end{cases}
\]
Therefore, the unique right limit of the matrix $P(M)$ is $r\mcr^q$.  It follows easily from the Hessenberg structure of $M$ that if $X$ is any right limit of the matrix $M$, then $P(X)=r\mcr^q$.  We then see that
\[
X\mcr^q=\frac{1}{r}XP(X)=\frac{1}{r}P(X)X=\mcr^qX,
\]
so $X$ is $q$-periodic along its diagonals.
\end{proof}

We will explore some examples of measures satisfying the hypotheses of Corollary \ref{prl} in Section \ref{eg}.  In the meantime, let us explore some consequences of that result.  Notice that it enables us to deduce some properties of every right limit of the matrix $M$ using only elementary algebra.  The following result is one example of such a property.

\begin{corollary}\label{limittr}
Suppose that $P$ is a monic polynomial of degree $q\geq1$, that $r>0$, and that the matrices $\{(P(M)-r\mcr^q)\mcr^n\}_{n\in\bbN}$ converge strongly to $0$ as $\nri$.  If $P(x)=x^q-\alpha x^{q-1}+\cdots$, then
\[
\lim_{\nri}\sum_{j=1}^qM_{n+j,n+j}=\alpha.
\]
Equivalently, if $X$ is any right limit of $M$, then for every $n\in\bbN$
\[
\sum_{j=1}^qX_{n+j,n+j}=\alpha;
\]
specifically it is independent of $n$.
\end{corollary}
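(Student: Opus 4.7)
The plan is to reduce the claim to a single sub-diagonal entry of $P(M)$ and then read off the desired asymptotic from the strong convergence hypothesis.

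First I would translate the target into sums of zeros. By Proposition~\ref{det}, $\sigma_n:=\sum_{k=1}^n M_{k,k}$ equals the sum of the zeros of $\Phi_n(z;\mu)$, so $\sum_{j=1}^q M_{n+j,n+j}=\sigma_{n+q}-\sigma_n$, and it suffices to prove $\sigma_{n+q}-\sigma_n\to\alpha$. Expanding $P(z)\varphi_n(z)=\sum_{j\geq 0}P(M)_{j+1,n+1}\varphi_j(z)$ and matching the coefficients of $z^{n+q-1}$ on both sides, using $P(z)=z^q-\alpha z^{q-1}+\cdots$, $\varphi_k(z)=\kappa_k z^k-\kappa_k\sigma_k z^{k-1}+\cdots$, and the leading-order identity $P(M)_{n+q+1,n+1}=\kappa_n/\kappa_{n+q}$, a short algebraic calculation should give
\[
P(M)_{n+q,n+1}=\frac{\kappa_n}{\kappa_{n+q-1}}\bigl(\sigma_{n+q}-\sigma_n-\alpha\bigr).
\]

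Next I would feed in the hypothesis. Applying strong convergence of $(P(M)-r\mcr^q)\mcr^n$ to the basis vector $e_1$ shows that the $(n+1)$-th column of $P(M)-r\mcr^q$ tends to $0$ in $\ell^2$; since $(\mcr^q)_{n+q,n+1}=0$, the individual entry $P(M)_{n+q,n+1}$ must tend to $0$.

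The main obstacle is the final step: upgrading $P(M)_{n+q,n+1}\to 0$ into $\sigma_{n+q}-\sigma_n-\alpha\to 0$ requires showing that the prefactor $\kappa_n/\kappa_{n+q-1}$ is bounded away from zero. My plan is to write
\[
\frac{\kappa_n}{\kappa_{n+q-1}}=\frac{\kappa_n/\kappa_{n+q}}{M_{n+q+1,n+q}}
\]
using the subdiagonal identity $M_{k+1,k}=\kappa_{k-1}/\kappa_k$. Condition~(i) of Theorem~\ref{magicish} forces the numerator to tend to the positive limit $r$, while the denominator is bounded above by $\|\mcm\|<\infty$ since $\supp\mu$ is compact; this gives a uniform positive lower bound. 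Combining everything yields $\sigma_{n+q}-\sigma_n\to\alpha$, which is the first form of the conclusion. The right-limit reformulation is then immediate: each $X_{n+j,n+j}$ is a subsequential limit of the corresponding entry of $M$, the sum over the finite range $j=1,\ldots,q$ interchanges with the limit, and the $q$-periodicity of $X$ from Corollary~\ref{prl}(b) makes the sum independent of $n$.
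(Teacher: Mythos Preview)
Your argument is correct and takes a genuinely different route from the paper's proof. The paper invokes Corollary~\ref{prl}(a) to obtain the ratio asymptotics $\Phi_n/\Phi_{n+q}\to 1/P(z)$ outside a large disk, writes each factor $\Phi_{n+k-1}/\Phi_{n+k}$ as the power series $\sum_{j\geq 0}((\pi_{n+k}M\pi_{n+k})^j)_{n+k,n+k}\,z^{-j-1}$ via (\ref{keydet}) and Cramer's rule, and then matches the coefficient of $z^{-q-1}$ with that of $1/P(z)$. In particular, the paper's proof rests on the external ratio-asymptotic result \cite[Theorem~2.2]{SimaRat}.

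Your approach is more self-contained: it reads off the single matrix entry $P(M)_{n+q,n+1}$ directly from the expansion of $P\varphi_n$ in the orthonormal basis, ties it to $\sigma_{n+q}-\sigma_n-\alpha$ by comparing the two top coefficients, and then uses only the hypothesis together with Theorem~\ref{magicish}(i) and the bound $M_{n+q+1,n+q}\leq\|\mcm\|$ to control the prefactor. This avoids the appeal to ratio asymptotics entirely and is arguably cleaner. One small remark: for the right-limit reformulation you do not actually need the $q$-periodicity from Corollary~\ref{prl}(b); independence of $n$ already follows because, for each fixed $n$, the sum $\sum_{j=1}^q X_{n+j,n+j}$ is the limit along the defining subsequence of $\sum_{j=1}^q M_{m+n+j,m+n+j}$, which equals $\alpha$ by the first part.
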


Given the results we have stated so far, it is natural to ask if there are examples of measures to which we can apply them.  Results concerning orthogonal polynomials with respect to measures supported on polynomial lemniscates can be found in \cite{Islands,Widom} and provide us with some examples, but very general results are difficult to find.  The following theorem provides a wealth of examples to which we can apply Theorem \ref{magicish} and the above corollaries.

\begin{theorem}\label{app}
Let $P$ be a monic polynomial of degree $q\geq1$ and let $r>0$ be fixed.  Suppose $\mu$ is a measure on $\{z:|P(z)|\leq r\}$ such that for each compact subset $K\subseteq\{z:|P(z)|<r\}$ there is a constant $C_K$ so that
\begin{align}\label{kbound}
\sum_{n=0}^{\infty}|\varphi_n(z;|P|^2\mu)|^2<C_K,\qquad z\in K.
\end{align}
Then the operators $\{(P(M)-r\mcr^q)\mcr^n\}_{n\in\bbN}$ converge strongly to zero as $\nri$.
\end{theorem}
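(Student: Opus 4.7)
\noindent\textit{Proof proposal.} The plan is to establish the $L^2(\mu)$-convergence $\|P\varphi_n(\cdot;\mu) - r\varphi_{n+q}(\cdot;\mu)\|_{L^2(\mu)}\to 0$, which, by reading off the action on the basis vectors $e_m\in\ell^2(\bbN)$, is equivalent to the desired strong convergence. Expanding the square and using the identity $\langle P\varphi_n,\varphi_{n+q}\rangle = \kappa_n/\kappa_{n+q}$ (immediate from matching leading coefficients) gives
\[
\|P\varphi_n - r\varphi_{n+q}\|_{L^2(\mu)}^2 = \|P\varphi_n\|_{L^2(\mu)}^2 + r^2 - 2r\kappa_n/\kappa_{n+q}.
\]
Since $|P|\le r$ on $\supp\mu$ and Cauchy-Schwarz gives $\kappa_n/\kappa_{n+q} = \langle P\varphi_n,\varphi_{n+q}\rangle \le \|P\varphi_n\|_{L^2(\mu)}\le r$, the whole theorem reduces to the single ratio asymptotic $\kappa_n/\kappa_{n+q}\to r$.

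The central technical step, and the one I expect to be the main obstacle, is to transfer the hypothesis on $\nu:=|P|^2\mu$ into the uniform bound $\varphi_n(z;\mu)\to 0$ on compact subsets $K\subseteq\{|P|<r\}$. As a preliminary step, the subharmonic mean-value inequality applied to $|\varphi_k(\cdot;\nu)|^2$ over a slightly enlarged compact $K'\subseteq\{|P|<r\}$, together with term-by-term integration of the hypothesis bound, shows $\sup_K\sum_{k>N}|\varphi_k(\cdot;\nu)|^2\to 0$ as $N\to\infty$; in particular $\sup_K|\varphi_k(\cdot;\nu)|\to 0$. I then expand $\varphi_n(\cdot;\mu)=\sum_{k=0}^n a_{n,k}\varphi_k(\cdot;\nu)$ in the $L^2(\nu)$-basis, noting $\sum_k|a_{n,k}|^2=\|\varphi_n\|_{L^2(\nu)}^2\le r^2$. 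For each fixed $k$, the identity $a_{n,k}=\langle P\varphi_n,P\varphi_k(\cdot;\nu)\rangle_{L^2(\mu)}$ (from $d\nu=|P|^2 d\mu$) expresses $a_{n,k}$ as a finite sum of entries $(P(M))_{j+1,n+1}$ with $j\le k+q$, each of which tends to zero as $n\to\infty$ because $e_n\to 0$ weakly in $\ell^2(\bbN)$ and $P(M)$ is bounded. Splitting the expansion of $\varphi_n(\cdot;\mu)$ at index $N$ and applying Cauchy-Schwarz to each piece then yields $\sup_K|\varphi_n(\cdot;\mu)|^2\to 0$.

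To deduce the ratio asymptotic, set $\sigma_n=\|\Phi_n(\cdot;\mu)\|_{L^2(\mu)}$ and $\tau_n=\|\Phi_n(\cdot;\nu)\|_{L^2(\nu)}$. The competitor bounds $\tau_n\le r\sigma_n$ (test $\Phi_n(\cdot;\mu)$ in $L^2(\nu)$) and $\sigma_{n+q}\le\tau_n$ (test $P\Phi_n(\cdot;\nu)$ in $L^2(\mu)$) are immediate. For the matching lower bound on $\tau_n/\sigma_n$ I use
\[
\sigma_n^2 \le \int|\Phi_n(\cdot;\nu)|^2 d\mu = \tau_n^2\int|\varphi_n(\cdot;\nu)|^2 d\mu
\]
and split the last integral at $\{|P|<r-\epsilon\}$: the first piece is $o(1)$ by the uniform decay of $\varphi_n(\cdot;\nu)$, and on the complement $d\mu\le(r-\epsilon)^{-2}d\nu$, so the second piece is at most $(r-\epsilon)^{-2}\|\varphi_n\|_{L^2(\nu)}^2=(r-\epsilon)^{-2}$; letting $\epsilon\to 0$ gives $\tau_n/\sigma_n\to r$. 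Finally, polynomial division $\Phi_{n+q}(z;\mu)=P(z)R_n(z)+S_n(z)$ with $R_n$ monic of degree $n$ and $\deg S_n<q$, together with the orthogonality $\Phi_{n+q}\perp S_n$ in $L^2(\mu)$, yields
\[
\sigma_{n+q}^2 = \|R_n\|_{L^2(\nu)}^2 - \|S_n\|_{L^2(\mu)}^2 \ge \tau_n^2 - \|S_n\|_{L^2(\mu)}^2.
\]
The remainder $S_n$ is the Hermite interpolant of $\Phi_{n+q}$ at the zeros of $P$ (all lying in $\{|P|<r\}$), so Lagrange estimates combined with Cauchy's integral formula for derivatives give $\|S_n\|_{L^2(\mu)}^2 \le C\sigma_{n+q}^2\varepsilon_n$ with $\varepsilon_n\to 0$ coming from the uniform decay of $\varphi_{n+q}(\cdot;\mu)$ near each zero. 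Hence $\sigma_{n+q}^2\ge\tau_n^2(1-o(1))$, and together with $\tau_n/\sigma_n\to r$ this gives $\kappa_n/\kappa_{n+q}=\sigma_{n+q}/\sigma_n\to r$, completing the proof.
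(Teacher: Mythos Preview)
Your argument is correct, and its overall architecture---reduce the strong convergence to $\kappa_n/\kappa_{n+q}\to r$, then sandwich $\sigma_{n+q}$ between $\tau_n=\|\Phi_n(\cdot;\nu)\|_{L^2(\nu)}$ and $r\sigma_n$---matches the paper's. The two proofs diverge at two points, and the comparison is worth recording.

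First, your transfer step (what you call ``the main obstacle'') is done in the paper in one line: since $d\nu=|P|^2\,d\mu\le r^2\,d\mu$, one has $\lambda(z;\mu)\ge r^{-2}\lambda(z;\nu)$, so the hypothesis bound on $\sum_k|\varphi_k(\cdot;\nu)|^2$ transfers immediately to $\sum_k|\varphi_k(\cdot;\mu)|^2$, and then your own subharmonic tail argument gives the uniform decay of $\varphi_n(\cdot;\mu)$ on compacta. Your $\ell^2(\nu)$-expansion with the $a_{n,k}\to 0$ argument is valid but considerably longer than necessary.

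Second, and this is the more interesting difference, you prove $\sigma_{n+q}/\tau_n\to 1$ by the polynomial-division identity $\sigma_{n+q}^2=\|R_n\|_{L^2(\nu)}^2-\|S_n\|_{L^2(\mu)}^2$ together with the Hermite-interpolation bound on $S_n$. The paper instead factors $P(z)=\prod_{j=1}^q(z-x_j)$ and applies the Christoffel transform $q$ times, invoking \cite[Theorem~5.2]{SimaRat} at each stage to get $\kappa_n(\mu_{m+1})=(1+o(1))\kappa_{n+1}(\mu_m)$. Your route is more elementary and entirely self-contained (no appeal to \cite{SimaRat}); the paper's route is shorter once that reference is granted and makes the role of the individual zeros of $P$ explicit. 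Either way, the step $\tau_n/\sigma_n\to r$ is proved identically in both arguments.
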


\noindent\textit{Remark.}  The condition (\ref{kbound}) is used in a similar context in \cite{SSST} (see \cite[Lemma 2.1]{SSST}, and also \cite{Slides}).

\vspace{2mm}

A particularly interesting consequence of Theorem \ref{app} is the following:

\begin{corollary}\label{areap}
Let $P$ be a monic polynomial of degree $q\geq1$ and let $r>0$ be fixed. If $\mu$ is area measure on $\{z:|P(z)|\leq r\}$, then $\kappa_n\kappa_{n+q}^{-1}\rightarrow r$ as $\nri$.
\end{corollary}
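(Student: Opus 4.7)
The plan is to apply Theorem \ref{app} to obtain strong convergence $(P(M)-r\mcr^q)\mcr^n \to 0$ as $\nri$, and then invoke conclusion (i) of Theorem \ref{magicish}, which is precisely the desired asymptotic $\kappa_n/\kappa_{n+q} \to r$. Since $\mu$ is area measure on $\Omega := \{z : |P(z)| \leq r\}$, the weighted measure $|P|^2 \mu$ coincides with $|P|^2\,dA$ on $\Omega$, and for any polynomial $p$ one has $\int |p|^2\,d(|P|^2\mu) = \int_\Omega |pP|^2\,dA$. The extremal characterization
\[
\sum_{n=0}^{\infty} |\varphi_n(z; |P|^2\mu)|^2 = \sup_{p} \frac{|p(z)|^2}{\int_\Omega |pP|^2\,dA}
\]
(supremum over all polynomials $p$ with nonzero denominator) reduces the verification of (\ref{kbound}) to producing, for each compact $K \subset \{|P|<r\}$, a constant $C_K$ such that $|p(z_0)|^2 \leq C_K \int_\Omega |pP|^2\,dA$ for every polynomial $p$ and every $z_0 \in K$.

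I would prove this by a two-region argument. Away from the zero set $Z(P)$, the polynomial $pP$ lies in $A^2(\Omega,dA)$, and the standard Bergman mean-value inequality gives $|(pP)(z_0)|^2 \leq C_0 \int_\Omega |pP|^2\,dA$ with $C_0$ depending only on $\mathrm{dist}(z_0,\partial\Omega)$; dividing by $|P(z_0)|^2$, which is bounded below on any compact subset of $\Omega$ avoiding $Z(P)$, gives the estimate there. Near a zero $w \in Z(P)$ of multiplicity $m$, factor $P(z) = (z-w)^m Q(z)$ with $Q(w)\neq 0$ and choose $\delta > 0$ small enough that $\bar{D}(w,\delta)\subset \Omega$ and $|Q|\geq |Q(w)|/2$ throughout $\bar{D}(w,\delta)$. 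Expanding $pQ$ in its Taylor series about $w$ and using the orthogonality of $\{(z-w)^k\}_{k\geq 0}$ with respect to the weight $|z-w|^{2m}\,dA$ on $D(w,\delta)$, one computes explicitly the reproducing kernel of this weighted Bergman disk and verifies it is uniformly bounded on $\bar{D}(w,\delta/2)$. This yields $|(pQ)(z_0)|^2 \leq C_1 \int_{D(w,\delta)} |pQ|^2 |z-w|^{2m}\,dA = C_1 \int_{D(w,\delta)} |pP|^2\,dA$ for $z_0 \in \bar{D}(w,\delta/2)$; dividing by $|Q(z_0)|^2 \geq |Q(w)|^2/4$ gives the desired bound uniformly on $\bar{D}(w,\delta/2)$, even as $|P(z_0)|$ may tend to zero there. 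Covering $K$ by finitely many such disks around its intersection with $Z(P)$, together with a closed remainder bounded away from $Z(P)$, completes the verification of (\ref{kbound}).

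The principal obstacle is the near-zero analysis: directly applying the Bergman inequality to $pP$ and then dividing by $|P|^2$ degenerates precisely where $|P|$ vanishes, forcing a separate local argument in a neighborhood of each zero. The key point is that the weighted-disk reproducing kernel remains uniformly bounded on a full subdisk around each zero, so the constants stay controlled even at points where $|P|$ is small, and finitely many overlapping neighborhoods then suffice to cover $K$.
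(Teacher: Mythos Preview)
Your overall strategy matches the paper's exactly: verify condition (\ref{kbound}) for $|P|^2\,dA$, then invoke Theorem \ref{app} and Theorem \ref{magicish}. The two-region split (away from $Z(P)$ versus near $Z(P)$) is also the same in spirit. The difference lies in the local estimates. Away from the zeros the arguments essentially coincide. Near a zero, you factor $P=(z-w)^mQ$ and appeal to the explicit weighted Bergman kernel on $D(w,\delta)$ with weight $|z-w|^{2m}$, checking it is bounded on a subdisk; this is correct (the orthonormal basis is $c_k(z-w)^k$ with $c_k^2=(k+m+1)/(\pi\delta^{2k+2m+2})$, and the diagonal kernel sums to a finite quantity for $|z-w|\le\delta/2$). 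The paper instead avoids any reproducing-kernel machinery: for $u$ near $Z(P)$ it integrates $|Q|^2$ over an \emph{annulus} $\{D_1\le|w-u|\le D_2\}$ chosen to lie entirely in $\{\delta<|P|<r\}$, uses $\int_0^{2\pi}|Q(u+te^{i\theta})|^2\,d\theta\ge\bigl|\int_0^{2\pi}Q(u+te^{i\theta})^2\,d\theta\bigr|=2\pi|Q(u)|^2$ via the mean-value property of $Q^2$, and bounds $|P|^2$ below by $\delta^2$ on that annulus. Your route is slightly more systematic and generalizes directly to other power weights; the paper's is more elementary, needing only the subharmonicity/mean-value trick and no kernel computation.
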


If we define a \textit{$q$-block Toeplitz} matrix as a matrix that is $q$-periodic along its diagonals, then another way of stating the conclusion of Corollary \ref{prl}b is to say that every right limit of $M$ is $q$-block Toeplitz.  Let us write the matrix $M$ as
\[
M=\begin{pmatrix}
M^{(q)}_{11} & M^{(q)}_{12} & M^{(q)}_{13} &  \cdots\\
M^{(q)}_{21} & M^{(q)}_{22} & M^{(q)}_{23} &  \cdots\\
0 & M^{(q)}_{32} & M^{(q)}_{33} &  \cdots\\
0 & 0 & M^{(q)}_{43} &  \cdots\\
\vdots & \vdots & \vdots &  \ddots
\end{pmatrix}
\]
where each $M_{ij}^{(q)}$ is a $q\times q$ matrix.  It is clear that the property of being $q$-block Toeplitz is equivalent to this block matrix form of $M$ being constant along its diagonals.  This also motivates our defining the property of being \textit{asymptotically $q$-block Toeplitz}.  We will say that the matrix $M$ is asymptotically $q$-block Toeplitz if there is a $q$-block Toeplitz matrix $X$ so that
\[
\wlim_{\nri}\mcl^{qn}M\mcr^{qn}=X.
\]
Corollary \ref{prl}b makes the conclusion that every right limit is $q$-block Toeplitz, but this does not imply the matrix $M$ is asymptotically $q$-block Toeplitz..  Our next result provides necessary and sufficient conditions for the matrix $M$ to be asymptotically $q$-block Toeplitz.

\begin{theorem}\label{uniquer}
Let $\mu$ be a finite measure of compact and infinite support.  Assume also that $\liminf_{\nri}\kappa_{n-1}\kappa_{n}^{-1}>0$.  The corresponding matrix $M$ is asymptotically $q$-block Toeplitz if and only if for every $s\in\{0,1,\ldots,q-1\}$ there is a positive real number $R$ and a function $f_s$ that is analytic in $\{z:R<|z|\leq\infty\}$ so that
\[
\lim_{\nri}\frac{\varphi_{nq+s-1}(z;\mu)}{\varphi_{nq+s}(z;\mu)}=f_s(z),\qquad R<|z|\leq\infty.
\]
\end{theorem}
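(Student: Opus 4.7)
The plan is to connect the matrix entries of $M$ near column $m$ to the Laurent expansion of $R_m(z):=\varphi_{m-1}(z)/\varphi_m(z)$ at $z=\infty$. Writing $M_{[m]}$ for the top-left $m\times m$ principal submatrix of $M$, Proposition~\ref{det} together with Cramer's rule yields $\Phi_{m-1}(z)/\Phi_m(z)=\bigl((zI-M_{[m]})^{-1}\bigr)_{mm}$, and a Neumann expansion then produces
\[
R_m(z)=\frac{\kappa_{m-1}}{\kappa_m}\bigl((zI-M_{[m]})^{-1}\bigr)_{mm}=\sum_{k\ge1}r_{m,k}\,z^{-k},\qquad r_{m,k}=\frac{\kappa_{m-1}}{\kappa_m}\bigl(M_{[m]}^{k-1}\bigr)_{mm},
\]
convergent for $|z|>\|M\|$ with the uniform bound $|r_{m,k}|\le\|M\|^{k}$. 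Because $M$ is upper Hessenberg, in the path expansion $(M_{[m]}^{k-1})_{mm}=\sum M_{m,\ell_1}M_{\ell_1,\ell_2}\cdots M_{\ell_{k-2},m}$ the constraint $\ell_{i+1}\ge\ell_i-1$ combined with $\ell_i\le m$ forces every vertex into $\{m-(k-1),\ldots,m\}$; thus $r_{m,k}$ becomes a universal polynomial (independent of $m$) in the entries $M_{a,b}$ with $a,b\in\{m-(k-1),\ldots,m\}$.

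For the forward direction, assuming $M$ is asymptotically $q$-block Toeplitz, I would set $m=nq+s$ and observe that every matrix entry appearing in $(M_{[m]}^{k-1})_{mm}$ is of the form $M_{qn+a,qn+b}$ with the shifted indices $a,b$ fixed, so that $r_{nq+s,k}\to r_{\infty,s,k}$ as $\nri$ for each $k$ and $s$. Coefficient-wise convergence together with the uniform tail bound $|r_{m,k}|\le\|M\|^{k}$, applied via a standard $\varepsilon/2$ splitting of the Laurent series, will then give uniform convergence of $R_{nq+s}$ on every annulus $\{|z|\ge R'\}$ with $R'>\|M\|$ to the analytic function $f_s(z):=\sum_{k\ge1}r_{\infty,s,k}z^{-k}$, producing the required ratio limits with $R=\|M\|$.

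For the converse, I would assume each $f_s$ exists. Since $\{R_m\}$ is uniformly bounded on $\{|z|>\|M\|\}$, Montel/Vitali will upgrade pointwise convergence on $\{R<|z|\le\infty\}$ to uniform convergence on compacta of $\{|z|>\max(R,\|M\|)\}$, from which coefficient convergence $r_{nq+s,k}\to r_{\infty,s,k}$ follows for all $k,s$. The case $k=1$ gives convergence of the subdiagonal entries $M_{m+1,m}=\kappa_{m-1}/\kappa_m$ along each residue class modulo $q$, to a positive limit by the hypothesis $\liminf\kappa_{n-1}/\kappa_n>0$. The remaining entries of column $m$ are then recovered by strong induction on the depth $i=m-a$ of $M_{a,m}$: one checks that $M_{m-i,m}$ appears linearly in $(M_{[m]}^{i+1})_{mm}$ (through the unique descending path $m\to m-1\to\cdots\to m-i\to m$) with coefficient equal to the product of subdiagonal entries $M_{m,m-1}M_{m-1,m-2}\cdots M_{m-i+1,m-i}$, while every other term in the path expansion involves only entries $M_{a',b'}$ with either $b'<m$, or $b'=m$ and $a'>m-i$, all of which have already been recovered at earlier stages of the induction (possibly after relabelling $m$ to pass through different residues modulo $q$). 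Since the coefficient has a nonzero limit, solving the linear equation yields convergence of $M_{m-i,m}$ along $m\equiv s\pmod{q}$; iterating over all $i$ and $s$ then gives convergence of $M_{nq+a,nq+b}$ for every fixed $(a,b)$, i.e.\ asymptotic $q$-block Toeplitz behaviour.

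The main obstacle will be the inductive bookkeeping in the converse direction: one must carefully track which residue class modulo $q$ controls each matrix entry appearing as an intermediate factor in the path expansion and verify that convergence has already been established along that specific subsequence before invoking it. The hypothesis $\liminf\kappa_{n-1}/\kappa_n>0$ is essential precisely because it ensures that the leading coefficient in the linear equation solved at each inductive step, being a product of subdiagonal limits, is bounded away from zero, so that the equation is non-degenerate for all sufficiently large $n$.
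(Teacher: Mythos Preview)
Your proposal is correct and follows essentially the same route as the paper: both arguments rest on the identity $\Phi_{m-1}/\Phi_m=\bigl((zI-M_{[m]})^{-1}\bigr)_{mm}$ obtained from Proposition~\ref{det} and Cramer's rule, the Neumann expansion into $\sum_j (M_{[m]}^j)_{mm}z^{-j-1}$, and the inductive recovery of the column entries $M_{m-i,m}$ by isolating the unique descending path $m\to m-1\to\cdots\to m-i\to m$ in the path expansion of $(M_{[m]}^{i+1})_{mm}$. The only cosmetic differences are that the paper separates off the subdiagonal limit $\kappa_{m-1}/\kappa_m$ first and then runs the induction on the monic ratios, whereas you carry the factor $\kappa_{m-1}/\kappa_m$ through explicitly; and you are more explicit than the paper about invoking Montel/Vitali and the uniform tail bound $|r_{m,k}|\le\|M\|^k$ to pass between pointwise convergence of ratios and coefficient-wise convergence of the Laurent series.
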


\vspace{4mm}

To clarify how our results fit into what is known about orthogonal polynomials corresponding to measures supported on polynomial lemniscates, we will immediately turn our attention to proving Theorem \ref{app} so that we may establish Corollary \ref{areap}.  After that we will consider some additional examples.  The examples in Section \ref{eg} highlight the utility and some subtleties of the results stated in this section.  Finally, in Section \ref{equiv}, we will provide proofs of the main results.

\vspace{2mm}

\noindent\textbf{Acknowledgements.}  It is a pleasure to thank Ed Saff for encouraging me to pursue this line of investigation and for much useful discussion.  I would also like to thank Barry Simon for useful feedback concerning this work.

\section{Proof of Theorem \ref{app}}\label{proofapp}

Our goal in this section is to prove Theorem \ref{app} and also Corollary \ref{areap}.  We will be considering finite measures $\mu$ supported on $\{z:|P(z)|\leq r\}$ for some monic polynomial $P$ of degree $q\geq1$ and some $r>0$ that also satisfy the bound (\ref{kbound}).  For convenience, let us define $\mu_q=|P|^2\mu$.  If $s<r$, the bound (\ref{kbound}) means
\begin{align}\label{toz}
\sum_{n=0}^{\infty}\int_{\{|P|\leq s\}}|\varphi_{n}(z;\mu_q)|^2d\mu_q(z)=\int_{\{|P|\leq s\}}\sum_{n=0}^{\infty}|\varphi_{n}(z;\mu_q)|^2d\mu_q(z)<\mu(\bbC)\cdot C_{\{z:|P(z)|\leq s\}}.
\end{align}
From this, we easily deduce that if $s<r$ is chosen arbitrarily, then
\[
\|\Phi_n(z;\mu_q)\|^2_{L^2(\mu_q)}=\left(1+o(1)\right)\int_{\{s<|P|\leq r\}}|\Phi_n(z;\mu_q)|^2d\mu_q(z)
\]
as $\nri$.

If we define (for any compactly supported measure $\nu$)
\[
\lambda(z;\nu)=\inf\left\{\int|Q(w)|^2d\nu(w):Q\mbox{ is a polynomial},\, Q(z)=1\right\},
\]
then it is well-known that
\[
\lambda(z;\nu)^{-1}=\sum_{n=0}^{\infty}|\varphi_n(z;\nu)|^2,
\]
(see \cite[Proposition 2.16.2]{Rice}).  Therefore, the bound (\ref{kbound}) is equivalent to the statement that $\lambda(z;\mu_q)$ is bounded uniformly from below away from zero on compact subsets of $\{z:|P(z)|<r\}$.  This easily implies that the same is true for $\mu$, which means for each compact set $K\subseteq\{z:|P(z)|<r\}$ there is a constant $C_K'$ so that
\[
\sum_{n=0}^{\infty}|\varphi_n(z;\mu)|^2<C_K',\qquad z\in K.
\]
Consequently, we may apply the above reasoning to $\mu$ to see that
\[
\|\Phi_n(z;\mu)\|^2_{L^2(\mu)}=\left(1+o(1)\right)\int_{\{s<|P|\leq r\}}|\Phi_n(z;\mu)|^2d\mu(z)
\]
as $\nri$.  We have therefore proven the following result:

\begin{prop}\label{boundwam}
Let $\mu$ be a measure on  $\{z:|P(z)|\leq r\}$ that satisfies the condition (\ref{kbound}).  Every weak asymptotic measure of $\mu$ is supported on $\{z:|P(z)|=r\}$.
\end{prop}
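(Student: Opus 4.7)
The plan is to reduce everything to showing that $\int_K |\varphi_n(z;\mu)|^2\,d\mu(z)\to 0$ for every compact set $K\subseteq\{z:|P(z)|<r\}$. If this holds, then for any $f\in C_c(\{z:|P(z)|<r\})$ we get $\int f\,d\nu_n\to 0$ where $\nu_n=|\varphi_n|^2\,d\mu$. Combined with $\supp\nu_n\subseteq\supp\mu\subseteq\{z:|P(z)|\leq r\}$, every weak limit of $\{\nu_n\}$ is forced to be supported on the lemniscate $\{z:|P(z)|=r\}$, which is exactly the proposition.

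The first step is to transfer the hypothesis (\ref{kbound}) from $\mu_q$ to $\mu$. Using the extremal characterization of the Christoffel function recalled just before the proposition, any polynomial $Q$ with $Q(z)=1$ at a fixed $z\in K$ satisfies
\[
\int|Q(w)|^2\,d\mu_q(w)=\int|Q(w)P(w)|^2\,d\mu(w)\leq r^2\int|Q(w)|^2\,d\mu(w),
\]
since $|P|\leq r$ on $\supp\mu$. Taking the infimum over such $Q$ yields $\lambda(z;\mu_q)\leq r^2\lambda(z;\mu)$, which via $\lambda^{-1}=\sum_n|\varphi_n|^2$ gives $\sum_n|\varphi_n(z;\mu)|^2\leq r^2\sum_n|\varphi_n(z;\mu_q)|^2\leq r^2C_K$ uniformly on $K$.

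With (\ref{kbound}) now in hand for $\mu$, the second step is a Tonelli swap exactly as in (\ref{toz}):
\[
\sum_{n=0}^\infty\int_K|\varphi_n(z;\mu)|^2\,d\mu(z)=\int_K\sum_n|\varphi_n(z;\mu)|^2\,d\mu(z)\leq r^2C_K\,\mu(K)<\infty,
\]
so the nonnegative summands tend to zero, which was the reduction made at the outset.

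I expect the only delicate point to be the first step: transferring (\ref{kbound}) between $\mu$ and $\mu_q$ requires choosing the direction of comparison carefully. The hypothesis $\supp\mu\subseteq\{z:|P(z)|\leq r\}$ is precisely what makes $\lambda(\cdot;\mu_q)\leq r^2\lambda(\cdot;\mu)$ go through, whereas the reverse comparison would break down at the zeros of $P$ inside the lemniscate. Everything after that is a routine Fubini swap plus the elementary fact that the terms of a convergent series of nonnegative numbers tend to zero.
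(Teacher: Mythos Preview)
Your proof is correct and follows essentially the same route as the paper: transfer the bound (\ref{kbound}) from $\mu_q$ to $\mu$ via the Christoffel-function comparison, then run the Tonelli swap (\ref{toz}) for $\mu$ to force the mass of $|\varphi_n|^2\,d\mu$ off any compact $K\subseteq\{|P|<r\}$. The paper merely asserts that the lower bound on $\lambda(\cdot;\mu_q)$ ``easily implies'' the same for $\mu$, whereas you spell out the inequality $\lambda(z;\mu_q)\leq r^2\lambda(z;\mu)$ explicitly; this is a welcome clarification, not a different argument.
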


We also make the following observation, which will be useful for the main calculation of the proof.

\begin{lemma}\label{muvan}
Let $\mu$ be a measure on  $\{z:|P(z)|\leq r\}$ that satisfies the condition (\ref{kbound}).  For any $s<r$ we have
\begin{align}\label{inttozero}
\lim_{\nri}\int_{\{z:|P|\leq s\}}|\varphi_n(z;\mu_q)|^2d\mu(z)=0.
\end{align}
\end{lemma}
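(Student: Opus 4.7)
The plan is to reduce the claim to the dominated convergence theorem. The key observation is that the set $K_s := \{z : |P(z)| \leq s\}$ is a compact subset of $\{z : |P(z)| < r\}$: it is bounded because $|P(z)| \to \infty$ as $|z| \to \infty$, closed by continuity of $P$, and strictly contained in $\{|P| < r\}$ because $s < r$. Hence the hypothesis (\ref{kbound}), which is stated for the orthonormal polynomials of $\mu_q = |P|^2\mu$, applies on $K_s$ and yields a constant $C = C_{K_s}$ with
\[
\sum_{n=0}^{\infty} |\varphi_n(z;\mu_q)|^2 \leq C, \qquad z \in K_s.
\]

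From this uniform summability, two facts follow immediately. First, since each of these convergent nonnegative series has finite sum, its terms must vanish, so $|\varphi_n(z;\mu_q)|^2 \to 0$ pointwise on $K_s$ as $\nri$. Second, each summand is bounded by the constant $C$ uniformly in both $n$ and $z \in K_s$. Because $\mu$ is a finite Borel measure, the constant $C$ is $\mu$-integrable on $K_s$, so the dominated convergence theorem delivers
\[
\lim_{\nri} \int_{K_s} |\varphi_n(z;\mu_q)|^2 \, d\mu(z) = 0.
\]

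There is no serious obstacle here; the argument is essentially a one-line application of dominated convergence once the pointwise bound is in hand. The only subtle point worth flagging is that the measures $\mu$ and $\mu_q$ need not agree on the zero set of $P$ (where $\mu_q$ must vanish but $\mu$ may still carry mass), so one cannot simply invert the relation $d\mu_q = |P|^2 d\mu$ to deduce the conclusion from the analogous statement (\ref{toz}) already proven for $\mu_q$. This is precisely why having (\ref{kbound}) as an honest pointwise bound -- rather than merely an integrated bound on $\mu_q$ -- is what makes the proof go through: it lets us replace the measure of integration from $\mu_q$ to $\mu$ without worrying about the behavior of $P$ at its zeros inside $K_s$.
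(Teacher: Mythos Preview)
Your proof is correct and in fact cleaner than the one in the paper. You observe that $K_s=\{|P|\le s\}$ is compact in $\{|P|<r\}$, apply the uniform pointwise bound from (\ref{kbound}), and then invoke dominated convergence directly: the series bound gives both a uniform dominating constant and the pointwise vanishing of the individual terms, and integrating against the finite measure $\mu$ finishes the job in one stroke.

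The paper instead splits $K_s$ into three pieces: the finite zero set $P^{-1}(0)$, a thin shell $\{0<|P|\le t\}$, and the annular region $\{t<|P|\le s\}$. On the zero set it uses square-summability pointwise; on the thin shell it uses the same pointwise bound you use, together with $\mu(\{0<|P|\le t\})\to 0$ as $t\downarrow 0$; on the outer annulus it compares $d\mu$ with $t^{-2}d\mu_q$ and appeals to (\ref{toz}). This decomposition is designed to leverage the already-established $\mu_q$-estimate (\ref{toz}) wherever possible and to isolate the trouble at $P^{-1}(0)$ explicitly, but once one realizes (as you do) that the pointwise hypothesis (\ref{kbound}) already controls everything on $K_s$, the detour through (\ref{toz}) is unnecessary. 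Your closing remark about why the pointwise form of (\ref{kbound}) is essential --- precisely because $d\mu$ cannot be recovered from $d\mu_q$ on $P^{-1}(0)$ --- identifies exactly the issue the paper's three-region split is working around.
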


\begin{proof}
Given any $\epsilon>0$, we may choose $t\in(0,s)$ so that
\begin{align*}
C_{\{z:|P(z)|\leq s\}}\cdot\mu\left(\{z:0<|P(z)|\leq t\}\right)<\epsilon.
\end{align*}
We calculate
\begin{align*}
&\int_{\{z:|P|\leq s\}}|\varphi_n(z;\mu_q)|^2d\mu(z)=\int_{\{z:|P|\leq t\}}|\varphi_n(z;\mu_q)|^2d\mu(z)+\int_{\{z:t<|P|\leq s\}}|\varphi_n(z;\mu_q)|^2d\mu(z)\\
&\qquad\leq\sum_{x\in\{P^{-1}(0)\}}\mu(\{x\})|\varphi_n(x;\mu_q)|^2+\int_{\{0<|P|\leq t\}}|\varphi_n(z;\mu_q)|^2d\mu+\frac{1}{t^{2}}\int_{\{t<|P|\leq s\}}|\varphi_n(z;\mu_q)|^2d\mu_q.
\end{align*}
The first term tends to zero as $\nri$ since $\{\varphi_n(x;\mu_q)\}_{n\geq0}$ is square summable for each $x\in\{P^{-1}(0)\}$.  Equation (\ref{toz}) implies that the third term tends to zero as $\nri$.  We also notice that
\[
\int_{\{z:0<|P|\leq t\}}|\varphi_n(z;\mu_q)|^2d\mu(z)\leq C_{\{z:|P(z)|\leq s\}}\cdot\mu\left(\{z:0<|P(z)|\leq t\}\right)<\epsilon.
\]
Therefore,
\[
\limsup_{\nri}\int_{\{z:|P|\leq s\}}|\varphi_n(z;\mu_q)|^2d\mu(z)<\epsilon.
\]
Since $\epsilon>0$ was arbitrary, this establishes (\ref{inttozero}).
\end{proof}

Now we have the tools we need for our main calculation.  Indeed, if $\mu$ is supported on $\{z:|P(z)|\leq r\}$, then
\begin{align*}
\kappa_n^{-2}(\mu_q)&=\int|\Phi_n(z;\mu_q)|^2d\mu_q(z)\\
&=\int_{\{s<|P|\leq r\}}|\Phi_n(z;\mu_q)|^2|P(z)|^2d\mu(z)\left(1+o(1)\right)\\
&\geq s^2\int_{\{s<|P|\leq r\}}|\Phi_n(z;\mu_q)|^2d\mu(z)\left(1+o(1)\right)\\
&=s^2\int_{\{z:|P(z|\leq r\}}|\Phi_n(z;\mu_q)|^2d\mu(z)\left(1+o(1)\right)-s^2\int_{\{z:|P|\leq s\}}|\Phi_n(z;\mu_q)|^2d\mu(z)\left(1+o(1)\right)\\
&\geq s^2\int_{\{|P|\leq r\}}|\Phi_n(z;\mu)|^2d\mu(z)\left(1+o(1)\right)-s^2\kappa_n^{-2}(\mu_q)\int_{\{|P|\leq s\}}|\varphi_n(z;\mu_q)|^2d\mu(z)\left(1+o(1)\right)\\
&=s^2\kappa_n^{-2}(\mu)+o(\kappa_n^{-2}(\mu_q)),
\end{align*}
where we used (\ref{inttozero}) in the last equality.  Since $s<r$ was chosen arbitrarily, we conclude that
\begin{align}\label{info}
\liminf_{\nri}\frac{\kappa_n(\mu)}{\kappa_n(\mu_q)}\geq r.
\end{align}

Notice that the measure $\mu_q$ can be obtained from the measure $\mu$ by $q$ applications of the \textit{Christoffel Transform}, which is the multiplication of a measure by the square modulus of a first degree monic polynomial.  Let us write
\[
P(z)=\prod_{j=1}^q(z-x_j)
\]
and define
\[
\mu_m=\prod_{j=1}^m|z-x_j|^2\mu,\qquad m=0,1,\ldots,q,
\]
(so that $\mu_0=\mu$).  Our assumptions imply that $\lambda(z;\mu_m)$ is uniformly bounded from below away from $0$ on compact subsets of $\{z:|P(z)|<r\}$ for every $m\in\{0,1,\ldots,q\}$.  This implies that for every $m\in\{0,\ldots,q-1\}$ we have
\[
\lim_{\nri}\frac{|\varphi_n(x_{m+1};\mu_m)|^2}{\sum_{j=0}^{n-1}|\varphi_j(x_{m+1};\mu_m)|^2}=0.
\]
It then follows from the proof of \cite[Theorem 5.2]{SimaRat} that
\[
\kappa_n(\mu_{m+1})=\kappa_{n+1}(\mu_m)\left(1+o(1)\right),\qquad m\in\{0,\ldots,q-1\},
\]
as $\nri$.  Iterating this relation $q$ times and applying (\ref{info}), we conclude
\begin{align*}
\liminf_{\nri}\frac{\kappa_n(\mu)}{\kappa_{n+q}(\mu)}\geq r.
\end{align*}
However, the extremal property implies
\[
\frac{\kappa_n(\mu)}{\kappa_{n+q}(\mu)}\leq r,
\]
so we conclude
\begin{align*}
\lim_{\nri}\frac{\kappa_n(\mu)}{\kappa_{n+q}(\mu)}=r.
\end{align*}
Therefore, we may apply Theorem \ref{magicish} to conclude that for such measures $\mu$, the operators $\{(P(M)-r\mcr^q)\mcr^n\}_{n\in\bbN}$ converge strongly to $0$ as $\nri$, which completes the proof of Theorem \ref{app}.  Furthermore, Corollary \ref{prl} implies that for every $z$ not in the convex hull of the support of $\mu$, one has
\begin{align}\label{prat}
\lim_{\nri}\frac{\Phi_{n+q}(z;\mu)}{\Phi_n(z;\mu)}=P(z).
\end{align}

\begin{proof}[Proof of Corollary \ref{areap}:]
Our above calculations show that we need only prove that area measure on a polynomial lemniscate satisfies the condition (\ref{kbound}).  Therefore, we must show that $\lambda(z;|P|^2dA)$ is bounded uniformly from below away from zero on compact subsets of $\{z:|P(z)|<r\}$.

Let $u\in\{z:|P(z)<r\}$ be fixed and let $Q$ be any polynomial satisfying $Q(u)=1$.  Fix some $\delta>0$ that is small enough so that the diameter of each connected component of the set $\{z:|P(z)|<\delta\}$ is small compared to the distance between $\{P^{-1}(0)\}$ and $\{z:|P(z)|=r\}$.  First we will consider the case in which $u\in\{z:|P(z)|<\delta\}$.  In this case, our choice of $\delta$ implies that there are positive numbers $D_1$ and $D_2$ that are independent of $u$ so that
\[
\{w:|w-u|=s\}\subseteq\{z:\delta<|P(z)|<r\},\qquad s\in[D_1,D_2].
\]
Therefore, we calculate
\begin{align*}
\int|Q(z)|^2|P(z)|^2dA(z)&\geq\delta^2\int_{D_1}^{D_2}\int_0^{2\pi}|Q(u+t\eitheta)|^2d\theta\,tdt\\
&\geq2\pi\delta^2\int_{D_1}^{D_2}\left|\int_0^{2\pi}Q(u+t\eitheta)^2\frac{d\theta}{2\pi}\right|tdt\\
&=\pi\delta^2(D_2^2-D_1^2).
\end{align*}
Now we will consider the case in which $u\subseteq\{z:\delta\leq|P(z)|<r\}$.  Let $d$ be the distance from $u$ to the boundary of the lemniscate.  In this case, we calculate
\begin{align*}
\int|Q(z)|^2|P(z)|^2dA(z)&\geq\int_{0}^{d}\int_0^{2\pi}|Q(u+t\eitheta)P(u+t\eitheta)|^2d\theta\,tdt\\
&\geq2\pi\int_{0}^{d}\left|\int_0^{2\pi}Q(u+t\eitheta)^2P(u+t\eitheta)^2\frac{d\theta}{2\pi}\right|tdt\\
&=\pi d^2|P(u)|^2\\
&\geq\pi\delta^2 d^2.
\end{align*}
It follows that if $u\in\{z:|P(z)|<r\}$, then
\[
\lambda(u,|P|^2\,dA)\geq\min\left\{\pi\delta^2(D_2^2-D_1^2),\pi\delta^2d^2\right\}.
\]
This implies area measure on the lemniscate $\{z:|P(z)|\leq r\}$ satisfies the condition (\ref{kbound}) as desired.
\end{proof}

\section{Further Examples}\label{eg}

Theorem \ref{app} provides a large class of examples that satisfy the conditions of Theorem \ref{magicish}.  Our focus in this section is a discussion of additional examples.  We have already seen that area measure on a polynomial lemniscate is a measure to which we can apply Theorem \ref{app}.  The calculations in Section \ref{proofapp} show that if we define $\mu=\sigma+\nu$, where $\sigma$ is area measure on $\{z:|P(z)|\leq r\}$ and $\nu$ is any positive and finite measure supported on $\{z:|P(z)|\leq r\}$, then the measure $\mu$ satisfies the hypotheses of Theorem \ref{app}.  Here is another example of a collection of measures to which our results apply.

\vspace{2mm}

\noindent\textbf{Example.}  In this example, we will consider measures supported on the boundary of a lemniscate region.  Let $\mu$ be supported on
\[
E=\bigcup_{i=1}^pE_i,
\]
where the collection $\{E_i\}_{i=1}^p$ consists of $p$ mutually exterior disjoint Jordan curves and each $E_i$ is a connected component of the set $\{z:|P(z)|=r\}$ for some monic polynomial $P$ of degree $q\geq p$ and some $r>0$.  Furthermore, assume that on each component $E_i$, the measure $\mu$ is absolutely continuous with respect to arc-length measure and the derivative $\mu'$ is continuous and bounded from below by a strictly positive constant.  The orthogonal polynomials for such measures were studied extensively in \cite{Widom}.  It is clear that for such a measure, every weak asymptotic measure is supported on $\{z:|P(z)|=r\}$.  Also, one can briefly adapt the calculations in Section \ref{proofapp} to show that $\lim_{\nri}\kappa_n\kappa_{n+q}^{-1}=r$ (see (\ref{lam2}) below).  We conclude that the operators $\{(P(M)-r\mcr^q)\mcr^n\}_{n\in\bbN}$ converge strongly to zero as $\nri$.

Now we will consider a perturbation of $\mu$ in the form of a mass point interior to one of the curves $E_i$.  We claim that $\lambda(\cdot;\mu)>0$ at every point interior to any of the curves $E_i$.  To see this, notice that since the components of $E$ are disjoint, they are smooth.  Therefore, if $x$ is interior to $E_i$, the harmonic measure for $E_i$ and the point $x$ (call it $\omega_{i,x}$) is absolutely continuous with respect to arc-length measure and has continuous derivative bounded above and below by positive constants (see \cite[page 65]{GarnMar}).  It follows that if $Q$ is a polynomial and $Q(x)=1$, then there is a positive constant $C$ so that
\begin{align}\label{lam2}
\|Q\|^2_{|L^2(\mu)}\geq\int_{E_i}|Q(z)|^2\mu'(z)d|z|\geq C\int_{E_i}|Q(z)|^2d\omega_{i,x}(z)
\geq C\left|\int_{E_i}Q(z)^2d\omega_{i,x}(z)\right|=C.
\end{align}

We conclude that $\{\varphi_n(x;\mu)\}_{n\in\bbN}\in\ell^2(\bbN)$ (as in the previous section).  This immediately implies
\[
\lim_{\nri}\frac{|\varphi_n(x;\mu)|^2}{\sum_{j=0}^{n-1}|\varphi(x;\mu)|^2}=0,
\]
and hence we may apply the results of \cite{SimaRat} to conclude that for any $t>0$ it holds that
\begin{align*}
\lim_{\nri}\frac{\kappa_n(\mu)}{\kappa_{n}(\mu+t\delta_x)}=1,
\end{align*}
which means
\[
\lim_{\nri}\frac{\kappa_n(\mu+t\delta_x)}{\kappa_{n+q}(\mu+t\delta_x)}=r.
\]
Even after adding this mass point to the measure, it is still true that $\lambda(\cdot;\mu+t\delta_x)$ is strictly positive on the interior of the curves comprising $E$.  Therefore, we may repeat the above procedure to add finitely many mass points interior to the curves $\{E_i\}_{i=1}^p$.  We conclude that for such perturbed measures, the operators $\{(P(M)-r\mcr^q)\mcr^n\}_{n\in\bbN}$ converge strongly to zero as $\nri$.  Additionally, Corollary \ref{prl} implies every right limit of $M$ is $q$-periodic along its diagonals and we have ratio asymptotics as in (\ref{prat}).

\vspace{2mm}

One of the subtleties of Theorem \ref{magicish} is that it does not require any a priori knowledge that $\mu$ is supported on a polynomial lemniscate (while Theorem \ref{app} does).  Indeed, our next example shows that Theorem \ref{magicish} applies in some cases when the measure $\mu$ has support outside such a lemniscate.

\vspace{2mm}

\noindent\textbf{Example.}  In this example, we will consider a measure supported on a collection of mutually exterior disjoint Jordan curves and a finite collection of points that lie exterior to all of these curves.  The orthogonal polynomials for such measures were studied in \cite{KaliKon}, and we will rely heavily on those results in this example.  As in the previous example, let
\[
E=\bigcup_{i=1}^pE_i,
\]
where the collection $\{E_i\}_{i=1}^p$ consists of $p$ mutually exterior disjoint Jordan curves and each $E_i$ is a connected component of the set $\{z:|P(z)|=r\}$ for some monic polynomial $P$ of degree $q\geq p$ and some $r>0$.  We will consider measures $\mu$ that are supported on $E\cup\{z_j\}_{j=1}^N$, where
\[
\{z_j\}_{j=1}^N\subseteq\{z:|P(z)|>r\}.
\]
Assume further that on each component $E_i$, the measure $\mu$ is absolutely continuous with respect to arc-length measure and the derivative $\mu'$ is continuous and bounded from below by a strictly positive constant $\sigma$.

The capacity of the set $E$ is $\sqrt[q]{r}$ (see \cite[Equation III.3.7]{SaffTot}), so let us define the measure $\nu$ by
\[
d\nu(\zeta)=\left(\prod_{j=1}^N|\zeta-z_j|^2/\sqrt[q]{r^2}\right)d\mu(\zeta).
\]
The previous example tells us that
\begin{align}\label{nas}
\lim_{\nri}\frac{\kappa_n(\nu)}{\kappa_{n+q}(\nu)}=r.
\end{align}
Combining \cite[Lemma 2]{KaliKon} and \cite[Theorem 2]{KaliKon}, we deduce that
\begin{align}\label{mnas}
\lim_{\nri}\frac{\kappa_{n-N}(\nu)}{\kappa_n(\mu)}=\sqrt[q]{r^{N}}.
\end{align}
If we combine equations (\ref{nas}) and (\ref{mnas}), we conclude that
\[
\lim_{\nri}\frac{\kappa_n(\mu)}{\kappa_{n+q}(\mu)}=r.
\]

Now we must prove that every weak asymptotic measure is supported on E, but this is not difficult. Indeed, it is clear that $\lambda(z_i;\mu)\geq1$ for all $i\in\{1,\ldots,N\}$, so
\[
\lim_{\nri}\sum_{j=1}^N\mu(\{z_j\})|\varphi_n(z_j;\mu)|^2=0,
\]
which immediately implies every weak asymptotic measure is supported on $E$. Now we may apply Theorem \ref{magicish} to conclude that the operators $\{(P(M)-r\mcr^q)\mcr^n\}_{n\in\bbN}$ converge strongly to $0$ as $\nri$.  Corollary \ref{prl} then implies that for every $z$ not in the convex hull of the support of $\mu$ it is true that
\begin{align*}
\lim_{\nri}\frac{P(z)\varphi_n(z;\mu)}{r\varphi_{n+q}(z;\mu)}=1,
\end{align*}
and every right limit of $M$ is $q$-periodic along its diagonals.

\vspace{2mm}

The next example provides a specific measure to which we can apply Theorem \ref{uniquer}.

\vspace{2mm}

\noindent\textbf{Example.}  If $\mu$ is area measure on the region defined by $\{z:|z^q-1|<r\}$ for some $r<1$, then \cite[Proposition 7.1]{Islands} implies
\[
\lim_{\nri}\kappa_{nq+s}\kappa_{nq+s+1}^{-1}=
\begin{cases}
1, & \qquad\, s\in\{0,\ldots,q-2\}\\
 r, & \qquad\, s=q-1
\end{cases}.
\]
Furthermore, \cite[Proposition 7.3]{Islands} also tells us that if $|z|$ is sufficiently large, then
\begin{align*}
\lim_{\nri}\frac{\Phi_{nq+s}(z;\mu)}{\Phi_{nq+s+1}(z;\mu)}=\begin{cases}
\frac{1}{z}\left(\frac{z^q-1+r^{2}}{z^q-1}\right)^{1/q},\quad &s\in\{0,1,\ldots,q-2\}\\
\frac{z^{q-1}}{z^q-1}\left(\frac{z^q-1+r^{2}}{z^q-1}\right)^{(1-q)/q},\quad &s=q-1.
\end{cases}
\end{align*}
Combining these two results and invoking Theorem \ref{uniquer} allows us to conclude that in this case, the matrix $M$ is asymptotically $q$-block Toeplitz.

\vspace{2mm}

The next section is devoted to the proofs of the main results.

\section{Proof of the Main Theorems}\label{equiv}

This section is devoted to the proofs of the main results from Section \ref{intro}.  We will say that a matrix $A$ is \textit{banded of width $m$} if $A_{j,k}=0$ whenever $|j-k|>m$.  We will denote by $e_k$, the element of $\ell^2(\bbN)$ with a $1$ in position $k$ and zeros elsewhere.

Our first task is to prove Theorem \ref{magicish}, the first step of which is the following lemma.

\begin{lemma}\label{upr}
Let $f$ be an entire function of $z$ and let $\mcg$ be a banded Toeplitz matrix of width $q\geq1$.  Suppose the operators $\{(f(M)-\mcg)\mcr^n\}_{n\in\bbN}$ converge strongly to $0$ as $\nri$.  Suppose further that
\[
\limsup_{\nri}\left(\|\mcg^ne_{(n+3)q}\|\right)^{1/n}=r.
\]
Then every weak asymptotic measure $\gamma$ is supported on $\{z:|f(z)|\leq r\}$.  If in fact
\begin{align}\label{actulim}
\lim_{\nri}\left(\|\mcg^ne_{(n+3)q}\|\right)^{1/n}=r,
\end{align}
then it is also true that every weak asymptotic measure $\gamma$ satisfies
\[
\{z:|f(z)|=r\}\cap\supp(\gamma)\neq\emptyset.
\]
\end{lemma}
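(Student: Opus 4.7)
The plan is to show, for every weak asymptotic measure $\gamma$ and every integer $n\geq 0$, the moment identity
\[
\int |f(z)|^{2n}\, d\gamma(z) \;=\; \|\mcg^n e_{(n+3)q}\|^2,
\]
whose right-hand side I will abbreviate as $\alpha_n^2$. Once this identity is in hand, both conclusions follow by elementary estimates. If there existed $z_0\in\supp\gamma$ with $|f(z_0)|>r$, continuity of $f$ would give a neighborhood $U$ of $z_0$ and $\epsilon>0$ with $|f|\geq r+\epsilon$ on $U$ and $\gamma(U)>0$, forcing $\alpha_n^2\geq (r+\epsilon)^{2n}\gamma(U)$ and $\limsup_{\nri}\alpha_n^{1/n}\geq r+\epsilon$, contrary to the hypothesis. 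Under the sharper assumption (\ref{actulim}), if $\supp\gamma\cap\{|f|=r\}$ were empty, then compactness of $\supp\gamma\subseteq\{|f|<r\}$ would yield $\epsilon>0$ with $|f|\leq r-\epsilon$ throughout $\supp\gamma$, so $\alpha_n^2\leq (r-\epsilon)^{2n}\gamma(\bbC)$; since $\gamma(\bbC)\leq 1$, this forces $\lim_{\nri}\alpha_n^{1/n}\leq r-\epsilon$, again a contradiction.

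For the moment identity itself, the starting point is that $f(M)$ represents multiplication by $f$ on $\mcp$, so that
\[
\int|f(z)|^{2n}|\varphi_m(z;\mu)|^2\,d\mu(z) \;=\; \|f^n\varphi_m\|_{L^2(\mu)}^2 \;=\; \|f(M)^n e_{m+1}\|^2.
\]
Writing $\gamma=\wlim_{k\to\infty}|\varphi_{n_k}|^2\,d\mu$, and using that $|f|^{2n}$ is bounded and continuous on the compact set $\supp\mu$, weak-$*$ convergence gives $\int|f|^{2n}\,d\gamma=\lim_{k\to\infty}\|f(M)^n e_{n_k+1}\|^2$. Since $n_k\to\infty$, it suffices to show that for each fixed $n$, the full limit $\lim_{j\to\infty}\|f(M)^n e_j\|^2$ exists and equals $\alpha_n^2$.

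The key technical step uses the telescoping identity
\[
f(M)^n - \mcg^n \;=\; \sum_{i=0}^{n-1} f(M)^{n-1-i}\,\bigl(f(M)-\mcg\bigr)\,\mcg^i.
\]
Because $\mcg$ is banded Toeplitz of width $q$, I claim $\mcg^i e_j$ lies entirely in the bulk once $j>iq$, in the precise sense that $\mcg^i e_j=\sum_{l=-iq}^{iq}\tau^{*i}(l)\,e_{j+l}$ for a convolution kernel $\tau^{*i}$ independent of $j$; this is checked inductively using that the supports never reach the boundary at index $1$. Consequently, for any $j_0>iq$ one has $\mcg^i e_j=\mcr^{j-j_0}\bigl(\mcg^i e_{j_0}\bigr)$ whenever $j\geq j_0$, and applying the strong-convergence hypothesis to the fixed vector $v_i:=\mcg^i e_{j_0}$ gives
\[
\bigl(f(M)-\mcg\bigr)\mcg^i e_j \;=\; \bigl(f(M)-\mcg\bigr)\mcr^{j-j_0} v_i \;\longrightarrow\; 0 \quad\text{as}\ j\to\infty.
\]
Since $f(M)$ is bounded (as $\mu$ has compact support), multiplying by $f(M)^{n-1-i}$ preserves the $\ell^2$ limit; summing the $n$ terms yields $(f(M)^n-\mcg^n)e_j\to 0$. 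The same bulk computation gives $\|\mcg^n e_j\|=\alpha_n$ for every $j>nq$, so $\|f(M)^n e_j\|\to\alpha_n$, which proves the identity.

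The main obstacle is this last technical step, the $\ell^2$ comparison of the columns $f(M)^n e_j$ and $\mcg^n e_j$ as $j\to\infty$. It is precisely there that the strong (rather than merely weak) form of the hypothesis matters: the strong convergence is applied to each of the finitely many fixed vectors $v_i=\mcg^i e_{j_0}$, $i=0,1,\ldots,n-1$. Everything else reduces to the moment-style comparisons summarized in the first paragraph.
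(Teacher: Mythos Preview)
Your proof is correct and follows essentially the same route as the paper's: both reduce to the moment identity $\int |f|^{2k}\,d\gamma = \|\mcg^k e_{(k+3)q}\|^2$ and then argue by comparing geometric growth rates. Your telescoping argument for $(f(M)^k-\mcg^k)e_j\to 0$ supplies the details behind a step the paper asserts in one line; otherwise the two proofs coincide.
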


\begin{proof}
Notice that
\[
\|(f(M)^k-\mcg^k)e_{n+1}\|^2=\|(f(M)^k-\mcg^k)\mcr^ne_1\|^2\rightarrow0,\qquad\nri.
\]
We conclude that for every $k\in\bbN$, the following limit relation holds:
\begin{align}\label{fabslim}
\lim_{\nri}\|f(M)^ke_n\|^2=\lim_{\nri}\|\mcg^ke_n\|^2=\|\mcg^ke_{(k+3)q}\|^2,
\end{align}
since $\mcg$ is a banded Toeplitz matrix of width $q$.  Our hypotheses imply that if $\varepsilon>0$ is arbitrary then we can find an $n_0\in\bbN$ that is sufficiently large so that $\|\mcg^ke_{(k+3)q}\|^2<(r+\varepsilon)^{2k}$ whenever $k>n_0$.

Now, let $\gamma$ be a weak asymptotic measure and $\mcn$ the corresponding subsequence.  Suppose for contradiction that there is some number $t>r$ so that $\gamma(\{z:|f(z)|\geq t\})=\beta>0$.  Then for all sufficiently large values of $k$ it holds true that
\begin{align*}
\left(r+\frac{t-r}{2}\right)^{2k}&>\limsup_{{\nri}\atop{n\in\mcn}}\|f(M)^ke_{n+1}\|^2=\lim_{{\nri}\atop{n\in\mcn}}\int|f(z)|^{2k}|\varphi_n(z;\mu)|^2d\mu(z)\\
&=\int|f(z)|^{2k}d\gamma(z)\geq t^{2k}\beta.
\end{align*}
Clearly this gives a contradiction when $k$ is sufficiently large.  We conclude that $\supp(\gamma)\subseteq\{z:|f(z)|\leq r\}$.

If we make the stronger assumption (\ref{actulim}), then if $\varepsilon\in(0,r)$ is arbitrary we can find an $n_1\in\bbN$ so that $\|\mcg^ke_{(k+3)q}\|^2>(r-\varepsilon)^{2k}$ whenever $k>n_1$.  Again, let $\gamma$ be a weak asymptotic measure with corresponding subsequence $\mcn$.  Suppose for contradiction that there is a positive $s<r$ so that $\gamma(\{z:|f(z)|\leq s\})=1$.  Then for all sufficiently large values of $k$ it holds true that
\begin{align*}
\left(r-\frac{r-s}{2}\right)^{2k}&<\liminf_{{\nri}\atop{n\in\mcn}}\|f(M)^ke_{n+1}\|^2=\lim_{{\nri}\atop{n\in\mcn}}\int|f(z)|^{2k}|\varphi_n(z;\mu)|^2d\mu(z)\\
&=\int|f(z)|^{2k}d\gamma(z)\leq s^{2k}.
\end{align*}
Clearly this gives a contradiction for all $k>n_1$.  We conclude that $\supp(\gamma)\cap\{z:|f(z)|=r\}$ is non-empty.
\end{proof}

Consider briefly the case when $\mu$ has compact support in the real line.  If $P$ is a monic polynomial of degree $q\geq1$ and $P(M)$ has unique right limit given by $\mcl^q+\mcr^q$ (on $\ell^2(\bbZ)$), then a straightforward application of Lemma \ref{upr} shows that every weak asymptotic measure is supported on $\{z:|P(z)|\leq2\}$ and every weak asymptotic measure contains in its support at least one point where $|P(z)|=2$.  

\vspace{2mm}

Now we turn to the proof of Theorem \ref{magicish}.

\begin{proof}[Proof of Theorem \ref{magicish}]
Let us begin by assuming both properties (i) and (ii).  We calculate
\begin{align*}
\|(P(M)-r\mcr^q)e_n\|^2&=\|P\varphi_{n-1}-r\varphi_{n+q-1}\|^2\\
&=\|P\varphi_{n-1}\|^2+r^2-2r\frac{\kappa_{n-1}}{\kappa_{n+q-1}}\\
&\rightarrow0,
\end{align*}
by our assumptions (i) and (ii).  It follows that
\[
\lim_{\nri}\|(P(M)-r\mcr^q)\mcr^ne_j\|=0,\qquad j\in\bbN,
\]
which implies the desired strong convergence.

To prove the converse, let us assume that the operators $\{(P(M)-r\mcr^q)\mcr^n\}_{n\in\bbN}$ converge strongly to $0$ as $\nri$.  First notice that this implies $\mcl^nP(M)\mcr^n$ converges to $r\mcr^q$ weakly as $\nri$, so it is obvious that $\kappa_n\kappa_{n+q}^{-1}\rightarrow r$ as $\nri$.

Now, let $\gamma$ be any weak asymptotic measure with corresponding subsequence $\mcn$.  Our strong convergence hypothesis and Lemma \ref{upr} imply that $\gamma$ is supported on $\{z:|P(z)|\leq r\}$.  Suppose for contradiction that there is an $s<r$ so that $\gamma(\{z:|P(z)|\leq s\})=\beta>0$.  The relation (\ref{fabslim}) implies
\[
\lim_{\nri}\|P(M)^{k}e_{n+1}\|^2=r^{2k},\qquad k\in\bbN.
\]
Therefore, we calculate
\begin{align*}
r^{2k}=\lim_{{\nri}\atop{n\in\mcn}}\|P(M)^{k}e_{n+1}\|^2&=\lim_{{\nri}\atop{n\in\mcn}}\int|P(z)|^{2k}|\varphi_n(z;\mu)|^2d\mu(z)\\
&=\int|P(z)|^{2k}d\gamma(z)\leq s^{2k}\beta+r^{2k}(1-\beta).
\end{align*}
This is a contradiction for every $k$, which implies $\gamma$ is supported on $\{z:|P(z)|= r\}$ as desired.
\end{proof}

Theorem \ref{magicish} lead us to Corollary \ref{prl}, which now enables us to prove Corollary \ref{limittr}.

\begin{proof}[Proof of Corollary \ref{limittr}:]
The relation (\ref{keydet}) and Cramer's Rule allow us to write
\begin{align*}
\frac{\Phi_{n-1}(z;\mu)}{\Phi_n(z;\mu)}=\left((z-\pi_nM\pi_n)^{-1}\right)_{n,n}=\sum_{j=0}^{\infty}\frac{((\pi_nM\pi_n)^j)_{n,n}}{z^{j+1}},\qquad|z|>\|\mcm\|.
\end{align*}
Corollary \ref{prl} tells us that if $|z|$ is sufficiently large, then
\begin{align*}
\frac{1}{P(z)}&=\lim_{\nri}\frac{\Phi_n(z;\mu)}{\Phi_{n+q}(z;\mu)}=
\lim_{\nri}\prod_{k=1}^q\frac{\Phi_{n+k-1}(z;\mu)}{\Phi_{n+k}(z;\mu)}\\
&=\lim_{\nri}\prod_{k=1}^q\left(\sum_{j=0}^{\infty}\frac{\left((\pi_{n+k}M\pi_{n+k})^j\right)_{n+k,n+k}}{z^{j+1}}\right).
\end{align*}
If the zeros of $P$ are $\{w_1,\ldots,w_q\}$, then $\alpha=\sum w_j$ and we can write
\[
\lim_{\nri}\prod_{k=1}^q\left(\sum_{j=0}^{\infty}\frac{(\pi_{n+k}M\pi_{n+k})^j_{n+k,n+k}}{z^{j+1}}\right)=\frac{1}{P(z)}=\frac{1}{z^q}\prod_{k=1}^q\left(\sum_{j=0}^{\infty}\frac{w_k^j}{z^j}\right).
\]
The desired conclusion follows by equating coefficients of $z^{-q-1}$.
\end{proof}

Let us now turn our attention to the proof of Theorem \ref{uniquer}.

\begin{proof}[Proof of Theorem \ref{uniquer}]
Let $\mu$ be as stated in the theorem and suppose that for every $s\in\{0,1,\ldots,q-1\}$ there is a a positive number $R$ and a function $f_s$ that is analytic in $\{z:R<|z|\leq\infty\}$ so that
\[
\lim_{\nri}\frac{\varphi_{nq+s-1}(z;\mu)}{\varphi_{nq+s}(z;\mu)}=f_s(z),\qquad R<|z|\leq\infty.
\]
In particular, we then know that for each $s\in\{0,\ldots,q-1\}$, the limit
\begin{align}\label{plusone}
\lim_{\nri}\frac{\kappa_{nq+s-1}}{\kappa_{nq+s}}=\lim_{\nri}M_{nq+s+1,nq+s}
\end{align}
exists.  Since we are assuming that $\liminf_{\nri}\kappa_{n-1}\kappa_{n}^{-1}>0$, we know that the limits in (\ref{plusone}) are all non-zero, so the monic orthogonal polynomials also exhibit ratio asymptotics through the sequences of the form $\{nq+s\}_{n\in\bbN}$.  The relation (\ref{keydet}) and Cramer's Rule allow us to write
\begin{align}\label{ratiosum}
\frac{\Phi_{n-1}(z;\mu)}{\Phi_n(z;\mu)}=\left((z-\pi_nM\pi_n)^{-1}\right)_{n,n}=\sum_{j=0}^{\infty}\frac{((\pi_nM\pi_n)^j)_{n,n}}{z^{j+1}},\qquad|z|>\|\mcm\|.
\end{align}
Therefore, for every $j\geq1$ and $s\in\{0,\ldots,q-1\}$, the following limit exists:
\begin{align}\label{allj}
\lim_{\nri}\left((\pi_{nq+s}M\pi_{nq+s})^j\right)_{nq+s,nq+s}.
\end{align}
The remainder of the proof is by induction.  For the base case, we apply (\ref{allj}) with $j=1$ to see that $\lim_{\nri}M_{nq+s,nq+s}$ exists for every $s\in\{0,\ldots,q-1\}$.

Suppose for our induction hypothesis that for a given $k\in\bbN$ there are numbers $\{A_{j,s}\}_{j=-1,s=0}^{j=k-1,s=q-1}$ so that
\begin{align*}
\lim_{\nri}M_{nq+s-j,nq+s}=A_{j,s}.
\end{align*}
Since $M$ is a Hessenberg matrix, we know
\begin{align}\label{ssum}
\nonumber&\lim_{\nri}\left((\pi_{nq+s}M\pi_{nq+s})^{k+1}\right)_{nq+s,nq+s}\\
&\qquad\qquad=\lim_{\nri}\sum_{i_1,\ldots,i_{k}=0}^{k}
M_{nq+s,nq+s-i_1}M_{nq+s-i_1,nq+s-i_2}\cdots M_{nq+s-i_k,nq+s}.
\end{align}
The induction hypothesis implies (setting $i_0=i_{k+1}=0$)
\[
\lim_{\nri}M_{nq+s-i_{p-1},nq+s-i_p}
\]
exists for every $p\in\{1,\ldots,k+1\}$ provided $i_{p-1}-i_{p}\leq k-1$.  The Hessenberg structure of the matrix $M$ assures us that $i_{p-1}-i_p\geq-1$ for every $p\in\{1,\ldots,k\}$ in order for the corresponding term in the sum (\ref{ssum}) to be non-zero.  Therefore, the induction hypothesis implies every term in the sum (\ref{ssum}) converges as $\nri$ except for the one in which $i_m=m$ for every $m\in\{1,\ldots,k\}$.  However, since the limit on the left hand side of (\ref{ssum}) exists, we conclude that the following limit exists:
\[
\lim_{\nri}M_{nq+s,nq+s-1}M_{nq+s-1,nq+s-2}\cdots M_{nq+s-k,nq+s}
\]
for every $s\in\{0,\ldots,q-1\}$.  By (\ref{plusone}), this means that for each $s\in\{0,\ldots,q-1\}$, there must be a number $A_{k,s}$ so that
\[
\lim_{\nri}M_{nq+s-k,nq+s}=A_{k,s}.
\]
This completes the induction and implies our desired conclusion.

For the converse, suppose that $M$ is asymptotically $q$-block Toeplitz.  Our hypotheses imply that for every $m,k\in\bbZ$ and $s\in\{0,\ldots,q-1\}$ the following limit exists:
\[
\lim_{\nri}M_{nq+s+m,nq+s+k}.
\]
In particular, this implies
\[
\lim_{\nri}\left((\pi_{nq+s}M\pi_{nq+s})^j\right)_{nq+s,nq+s}
\]
exists for every $j\in\bbN$ (by the formula (\ref{ssum})).  It then follows from (\ref{ratiosum}) that there are functions $\{g_s\}_{s=0}^{q-1}$ that are analytic in $\{z:\|\mcm\|<|z|\leq\infty\}$ such that
\begin{align}\label{gsexist}
\lim_{\nri}\frac{\Phi_{nq+s-1}(z;\mu)}{\Phi_{nq+s}(z;\mu)}=g_s(z),\qquad\|\mcm\|<|z|\leq\infty.
\end{align}
Furthermore, since $M$ is asymptotically $q$-block Toeplitz, it must be the case that for each $s\in\{0,\ldots,q-1\}$ the limit $\lim_{\nri}\kappa_{nq+s-1}\kappa_{nq+s}^{-1}$ exists and is non-zero (since $\liminf_{\nri}\kappa_{n-1}\kappa_{n}^{-1}>0$) for every $s\in\{0,\ldots,q-1\}$.  Combining this with (\ref{gsexist}) proves the existence of functions $\{f_s\}_{s=0}^{q-1}$ as in the statement of the theorem.
\end{proof}

\vspace{7mm}

\vspace{7mm}

\noindent \textsc{Brian Simanek,  Department of Mathematics, 1326 Stevenson Center, Vanderbilt University, Nashville, TN}

\vspace{1mm}

\noindent \texttt{brian.z.simanek$\at$vanderbilt.edu}

\end{document}